\newcommand{\cal}{\mathcal}
\newcommand{\Prob}{{\mathbb P}}
\newcommand{\Exp}{{\mathbb E}}
\newcommand{\dd}{{\mathrm{d}}}
\newcommand{\e}{{\mathrm{e}}}
\newcommand{\oh}{{\mathrm{o}}}
\newcommand{\Oh}{{\mathrm{O}}}
\newcommand{\halmoss}{{\mbox{\, \vspace{3mm}}} \hfill
\mbox{$\Diamond$}}
\newcommand{\RR}{\mathbb {R}}
\newcommand{\PP}{\mathbb {P}}
\newcommand{\EE}{\mathbb {E}}
\newtheorem{Th}{Theorem}[section]
\newtheorem{Lemma}{Lemma}[section]
\newtheorem{Prop}{Proposition}[section]
\newtheorem{Cor}{Corollary}[section]
\newtheorem{Rem}{Remark}[section]
\newcommand{\logsim}{\approx_{\rm log}}
\newcommand{\restart}{RESTART}
\newcommand{\indi}[1]{{\mathbf 1}\bigl\{{#1}\bigr\}}
\newcommand\RESTART{\textsc{restart}}
\newcommand{\N}{{\cal N}}
\newcommand{\D}{{\mathrm d}}
\begin{document}
\title{Time inhomogeneity in longest gap and longest run problems}


\author[S. Asmussen]{S\o ren Asmussen}
\email{asmus@math.au.dk}
\address{Aarhus University}
\author[J. Ivanovs]{Jevgenijs Ivanovs}
\email{jevgenijs.ivanovs@unil.ch}
\address{University of Lausanne}
\author[A. R. Nielsen]{Anders R\o nn Nielsen}
\email{arnielsen@math.ku.dk}
\address{University of Copenhagen}




\begin{abstract}
Consider an inhomogeneous Poisson process and let $D$ be the first of its epochs which is followed by a gap of size $\ell>0$.
We establish a criterion for $D<\infty$ a.s., as well as for $D$ being long-tailed and short-tailed, and obtain logarithmic tail asymptotics in various cases.
These results are translated into the discrete time framework of independent non-stationary Bernoulli trials
where the analogue of $D$ is the waiting time for the first run of ones of length $\ell$.
A main motivation comes from computer reliability, where
$D+\ell$ represents the actual execution time of a program or transfer of a file of size $\ell$ in presence of failures (epochs of the process) which necessitate restart.
\end{abstract}


\keywords{Bernoulli trials, heads runs, tail asymptotics, Poisson point process, delayed differential equation, computer reliability}

\thanks{Financial support by the Swiss National Science Foundation Project 200020 143889 is gratefully acknowledged.}

\maketitle


\section{Introduction}\label{S:Intr}
This paper is concerned with the study of the time
\[D\ =\ \min\{T_n:\,T_{n+1}-T_n\geq \ell\}\]
of occurrence of the first gap of length $\ell$ in an inhomogeneous Poisson process $\N$ on $(0,\infty)$ with epochs
 $0<T_1<T_2<\cdots$ (where we use the convention $T_0=0$). In particular, we study the logarithmic asymptotics of the tail $\Prob(D>t)$ as $t\to\infty$ subject to a variety of forms of the rate function $\mu(t)$ of~$\N$. 
 
 The tail probability $\Prob(D>t)$ can alternatively be written as $\Prob\bigl(L(t+\ell)< \ell\bigr)$ where
 \[L(t)\ =\ \sup\{T_{n+1}\wedge t-T_n:\,T_n< t\}\]
 is the longest gap between epochs before $t$. In this formulation, the time-homogeneous problem 
 where $\mu(t)\equiv\mu$ has a classical
 discrete time parallel as the study of the longest success run $L_n$ of 
 $n$ i.i.d.\ Bernoulli trials $\xi_1,\ldots,\xi_n$, with $\Prob(\xi_k=0)$ taking the role of $\mu$. This is an old and a well-studied problem with applications to insurance, finance, traffic and reliability, see~\cite{balakrishnan2011runs,erdos,vonMises}, nevertheless there is hardly any literature on the inhomogeneous case.
 In the body of the paper, we consider the continuous time Poisson framework but outline the
 translation to  the inhomogeneous Bernoulli case (where zeroes take the role of epochs)
 in Section~\ref{sec:discrete}.

 The asymptotics of the tail $\Prob(D>t)$ is fairly easy to obtain  in the homogeneous  case
 where a renewal argument easily gives $\PP(D>t)\sim c\e^{-\gamma t}$ as $t\rightarrow\infty$, with $\gamma$ being a root of a certain equation, see Proposition~\ref{prop:homogen} below.
In contrast, the behaviour is more diverse in the inhomogeneous case, and it may even happen that
$\Prob(D<\infty)<1$. 
In Section~\ref{S: Dfin}, we show that the critical
rate of increase of $\mu(t)$ for this phenomenon is $\ell \log t$.
Thus the rate
of increase to $\infty$ of $\mu(t)$ can only be allowed to be very modest for $D$ to be finite a.s. In addition, in Section~\ref{sec:long_short} we show that if $\mu(t)\rightarrow\infty$ then $D$ has a long-tailed distribution, i.e.\ $\PP(D>t+s)/\PP(D>t)\rightarrow 1$ as $t\rightarrow\infty$, whereas $\PP(D>t+s)/\PP(D>t)\rightarrow 0$ when $\mu(t)\rightarrow 0$.

Our asymptotic study is presented in Section~\ref{sec:asymptotics}, where we separately discuss the following three cases: 
(i) $\mu(t)\rightarrow\mu$, (ii) $\mu(t)\rightarrow 0$ and (iii) $\mu(t)\rightarrow\infty$.
Note that (ii) includes the case  where $\int_0^\infty \mu(t)\,\dd t<\infty$ so that there is a last epoch
$T^*<\infty$ of $\cal N$. It could then happen that $D=T^*$, but our results (based on a bounding argument) show that typically the tail of $D$ is lighter than that of $T^*$.
Particular examples studied are $\mu(t)=a\log^{-b} t$, $\mu(t)=at^{-b}$ and $\mu(t)=a\e^{-b t}$. The long-tailed case (iii) is analyzed using a delay differential equation derived in Section~\ref{sec:exact}; a particular example 
is $\mu(t)=b\log t$ for $b\in (0,1/\ell)$. We also identify a critical rate separating the cases when $\EE D^p$ is finite or infinite.
Section~\ref{sec:Restart} deals with what provided our initial motivation, the study of the tail of the total execution
time $X$ of a task like program or file transmission in a fault-tolerant computing environment
working under the \RESTART{} protocol, where
the task needs to be completely restarted after failure. Here $\ell$ takes the role of the ideal task time,
failures occur at the epochs of $\cal N$ and so $X=\ell+D$. Earlier studies of similar problems are in
Asmussen \emph{et al.}~\cite{A5,Buda,Olebook} and  Jelenkovi\'c  \emph{et al.}~\cite{JT2,JT3}. The novelty here
is the time-inhomogeneity.
We also discuss a related \RESTART{} problem with homogeneous failures, but time-varying service rate $r(t)$.
A main idea is to use
a simple time change
to transform $\N$  to a homogeneous Poisson$(\mu)$ process.

Finally, Section~\ref{sec:discrete}
gives the corresponding results for the discrete time inhomogeneous Bernoulli  case.
Intuitively, this is connected to the Poisson framework by $p_i=\Prob(\xi_k=1)=\e^{-\mu(i)}$, which is roughly the probability of no failures in $(i-1,i]$, and with one exception,
the analysis is indeed a straightforward translation. Classical references such as ~\cite{balakrishnan2011runs,erdos,vonMises}
only treat time-homogeneity. Time-inhomogeneity only seems to have been studied in the 
framework of Markovian regime switching which is somewhat different from the models of this paper
by being asymptotically stationary rather than exhibiting a trend. Some references are
\cite{antzoulakos1999waiting,fu1994distribution,{wallclock},Olebook} (\cite{wallclock} also contains some early
and in part unprecise version of a few of the results of this paper).

\subsubsection*{Preliminaries}
We represent the Poisson point process $\N$ on $(0,\infty)$ as a random subset of  $(0,\infty)$. The intensity measure is denoted $M(\dd t)$ and taken absolutely continuous with respect to Lebesgue measure on $(0,\infty)$, i.e.\ $M(\D t)=\mu(t)\D t$ for some rate function $\mu(t)$; we also write $M(a,b)=\int_a^b\mu(t) \dd t$. 
If we write $\N(a,b)$ for the number of points in $(a,b)$, we thus have
\begin{equation}\label{23.9a}
\Prob\bigl(\N(a,b)=0\bigr)=\e^{-M(a,b)}\,,\quad \Prob\bigl(\N(a,b)\ge 1\bigr)=1-\e^{-M(a,b)}\le M(a,b)
\,.
\end{equation}
Moreover, it is assumed that $M(0,t)<\infty$ for any $t$, so that $\N(a,b)<\infty$ a.s.\ when $b<\infty$,
and that $\mu(t)>0$ for (Lebesgue) almost all $t\geq 0$. The latter assumption guarantees that 
$\PP(D>t)>0$ for any $t>0$, and can be replaced by a weaker one.

\begin{Rem}\label{rem:l=1}\rm
Given the intensity measure $M(\dd t)$ and $\ell$, we may scale down time by $\ell$ and consider the new point process
${\cal N}'={\cal N}/\ell$. That is, we may take $\ell'=1$ and $M'(0,t)=M(0,t\ell)$, yielding $\mu'(t)=\ell\mu(t\ell)$ and
(in obvious notation)
\begin{equation}\label{27.8a}
\Prob\bigl(D>t\,\big|\,\ell,\mu(\cdot)\bigr)\ =\ \Prob(D>t) \ =\ 
\Prob(D'>t/\ell)\ =\ \Prob\bigl(D>t/\ell\,\big|\,1,\ell\mu(\cdot\ell)\bigr)
\end{equation}
Nevertheless, we formulate all our results for a general $\ell$, but sometimes switch to $\ell=1$ in the proofs.\halmoss
\end{Rem}

Finally, the relation $f(x)\sim g(x)$ means $f(x)/g(x)\to 1$ and  $f(x)\logsim g(x)$  logarithmic asymptotics
as in large deviations theory, i.e.\ $\log f(x)/\log g(x)\to 1$.

\section{First calculations}\label{sec:exact}
We start by recalling the famous Slivnyak's formula of Palm theory, see e.g.~\cite{stoch_geom}, which is the basic tool in most of our calculations. For any non-negative (measurable) function $h$ it states that
\[\EE\sum_{t\in \N}h(t,\N\backslash\{t\})=\int_0^\infty \EE h(t,\N)\mu(t)\,\dd t.\]
In this setting the indicator that there are no gaps in $(0,t)$ will often be useful:
\begin{equation}\label{eq:indicator}
\indi{L(t)<\ell}=\indi{T_{n+1}\wedge t-T_n<\ell,\forall T_n\in[0,t)}=\indi{D>t-\ell},\end{equation} which depends only on the points of $\N$ in $[0,t)$ and hence  is independent of $\N\cap [t,\infty)$.

We first present a delay differential equation for the tail probabilities.
It will be used for a crucial estimate in Section~\ref{SS:Long_tail} and is also potentially useful 
for computations of exact values of the $\Prob(D>t)$.
\begin{Prop}\label{prop:calc}
It holds for $t\geq 0$ that
\[\PP(D\in(t,\infty))=\int_t^\infty\e^{-M(s,s+\ell)}\PP(D>s-\ell)\mu(s)\,\dd s.\]
\end{Prop}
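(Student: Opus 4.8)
The plan is to rewrite the event $\{D\in(t,\infty)\}$ as a sum over the epochs of $\N$ and apply Slivnyak's formula. The point is that, for $t\ge 0$, whenever $D$ is finite and $>t$ it is itself a genuine epoch of $\N$, namely the first one followed by a gap of length $\ge\ell$; so on $\{D\in(t,\infty)\}$ there is exactly one $s\in\N$ with $s>t$ and $s=D$, while on the complementary event — in particular on $\{D\le t\}$, on $\{D=\infty\}$, and in the degenerate case $\{D=0\}=\{T_1\ge\ell\}$ — there is none. Hence, with $L^\nu(s)$ denoting the longest gap of a configuration $\nu$ before $s$ (initial segment from $0$ included, as in the definition of $L$),
\[\indi{D\in(t,\infty)}\ =\ \sum_{s\in\N}h\bigl(s,\N\backslash\{s\}\bigr),\qquad h(s,\nu):=\indi{s>t}\,\indi{\nu\cap(s,s+\ell)=\emptyset}\,\indi{L^\nu(s)<\ell}.\]
Indeed, writing $s=T_m$, the identity $s=D$ holds iff $T_{m+1}-T_m\ge\ell$ (i.e.\ $\N\cap(s,s+\ell)=\emptyset$) and $T_{k+1}-T_k<\ell$ for all $k<m$; since $T_{k+1}\le T_m=s$ for such $k$, the latter is exactly $L^\nu(s)<\ell$. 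Moreover $h(s,\cdot)$ depends on the configuration only through its points in $[0,s)\cup(s,s+\ell)$, so its value is unchanged whether evaluated at $\N$ or at $\N\backslash\{s\}$.

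Taking expectations and applying Slivnyak's formula then gives
\[\PP(D\in(t,\infty))\ =\ \int_0^\infty\EE\, h(s,\N)\,\mu(s)\,\dd s\ =\ \int_t^\infty\EE\bigl[\indi{\N\cap(s,s+\ell)=\emptyset}\,\indi{L(s)<\ell}\bigr]\mu(s)\,\dd s.\]
At this stage I would invoke the independence property of the Poisson process exactly as noted after \eqref{eq:indicator}: the first indicator is a function of $\N\cap(s,s+\ell)$ and the second of $\N\cap[0,s)$, disjoint regions, so the two factors are independent. By \eqref{23.9a} the expectation of the first is $\e^{-M(s,s+\ell)}$, and by \eqref{eq:indicator} the second indicator equals $\indi{D>s-\ell}$; substituting, the integrand becomes $\e^{-M(s,s+\ell)}\,\PP(D>s-\ell)\,\mu(s)$, which is the asserted identity.

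The one place that needs care is the first paragraph: one must verify that $\{D\in(t,\infty)\}$ is genuinely the disjoint union $\bigcup_{s\in\N,\,s>t}\{s=D\}$ (correctly disposing of the boundary cases $D=0$ and $D=\infty$), translate $\{s=D\}$ for $s\in\N$ into the stated product of indicators, and check that $h$ is jointly measurable in $(s,\nu)$, which is routine. Once $h$ is chosen correctly, everything afterwards is just Slivnyak's formula, the disjoint‑support independence, and the substitution of the already‑recorded identities \eqref{23.9a} and \eqref{eq:indicator}, with no further computation required.
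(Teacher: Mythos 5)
Your proof is correct and takes essentially the same route as the paper's: rewrite $\indi{D\in(t,\infty)}$ as a sum over $s\in\N$ of the indicator that $s$ is the first point followed by a gap, apply Slivnyak's formula, use independence of the two indicator factors (disjoint supports $[0,s)$ and $(s,s+\ell)$), and substitute via \eqref{23.9a} and \eqref{eq:indicator}. The paper's version is merely terser, noting ``there can be only one such location'' without spelling out the boundary cases $D=0$ and $D=\infty$ that you handle explicitly.
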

\begin{proof}
Consider the event $D\in(t,\infty)$, which means that there is a point $s>t$ followed by a gap, and additionally
there are no gaps in $(0,s)$. There can be only one such location~$s$ and hence only one such point of $\N$ a.s.
Hence by Slivnyak's 
formula we readily obtain
\begin{align*}\PP(D\in(t,\infty))&=\EE\sum_{s\in \N,s>t}\indi{\N\cap (s,s+\ell)=\emptyset}\indi{\text{no gaps in }(0,s)}\\
&=\int_t^\infty\PP(\N\cap(s,s+\ell)=\emptyset)\PP(D>s-\ell)\mu(s)\,\dd s\\&
=\int_t^\infty\e^{-M(s,s+\ell)}\PP(D>s-\ell)\mu(s)\,\dd s\,.
\end{align*}
Here we used that the above indicators are independent and stay unchanged when we remove $s$ from $\N$, since $(0,s)$ and $(s,s+\ell)$ are disjoint and do not contain $s$.
\end{proof}

Differentiating the result of Proposition~\ref{prop:calc} at $t$, we obtain the  delay differential equation
\begin{equation}\label{eq:diff_eq}
\Prob(D>t)'=-\e^{-M(t,t+\ell)}\PP(D>t-\ell)\mu(t),\quad t\geq 0.
\end{equation}
This may be solved in the intervals $\bigl[k\ell,(k+1)\ell\bigl)$ by using recursion in  $k$ and the initial
condition \begin{align}\label{20.9a}
&\Prob(D>t)=1-\int_0^t\e^{-M(s,s+\ell)}\mu(s)\dd s-\e^{-M(0,\ell)}, &t\in[0,\ell),
\end{align}
which follows from $\PP(D=0)=\e^{-M(0,\ell)}$ and the consequence
\[\PP(D\in(0,t])\ =\ \int_0^t\e^{-M(s,s+\ell)}M(\dd s)\,,\quad t\in[0,\ell]\]
of Proposition~\ref{prop:calc}.
Letting $f(t)=-\log\Prob(D>t)$ we also have
\begin{equation}\label{eq:difflog}f'(t)=\e^{-M(t,t+\ell)}\mu(t)\e^{f(t)-f(t-\ell)},\end{equation}
which may be more suitable for numerical computation.

%

\section{Finiteness of $D$}\label{S: Dfin}
First, we present an integral test.
\begin{Th}\label{prop_int_test} If $M(0,\infty)<\infty$ then $D<\infty$ a.s. Otherwise, let
 \[I=\int_0^\infty \e^{-M(t,t+\ell)}\mu(t)\dd t\,.\] Then $D<\infty$ a.s.\ if $I=\infty$ and   $\PP(D=\infty)>0$ if $I<\infty$.
\end{Th}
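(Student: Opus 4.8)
The plan is to handle the three cases via the representation $\indi{D=\infty}=\lim_{t\to\infty}\indi{L(t)<\ell}$ together with Slivnyak's formula, exactly as in Proposition~\ref{prop:calc}. If $M(0,\infty)<\infty$, then $\N$ has finitely many points a.s., so the complement of $\bigcup_{t\in\N}(t,t+\ell)$ in $(0,\infty)$ contains a half-line $(s,\infty)$; the last point $T^*$ of $\N$ is then followed by an infinite gap, hence $D\le T^*<\infty$ a.s. This is the easy case and needs only a sentence.

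For the remaining two cases assume $M(0,\infty)=\infty$. First I would compute $\PP(D=\infty)$ in closed form. Letting $t\to\infty$ in Proposition~\ref{prop:calc} and using monotone convergence on the left (since $\{D>t\}\downarrow\{D=\infty\}$) gives
\[
\PP(D=\infty)\ =\ \int_0^\infty \e^{-M(s,s+\ell)}\PP(D>s-\ell)\,\mu(s)\,\dd s\,.
\]
Now if $I=\infty$, I claim the integral on the right forces $\PP(D=\infty)=0$. Indeed, suppose for contradiction $\PP(D=\infty)=c>0$. Since $\PP(D>s-\ell)\ge \PP(D=\infty)=c$ for all $s$, the right-hand side is at least $c\int_0^\infty \e^{-M(s,s+\ell)}\mu(s)\,\dd s=cI=\infty$, contradicting that the left-hand side is a probability $\le 1$. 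Hence $\PP(D=\infty)=0$, i.e.\ $D<\infty$ a.s.

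Conversely, if $I<\infty$, I must show $\PP(D=\infty)>0$. The cleanest route is a direct Borel--Cantelli / union-bound argument on the epochs rather than the fixed-point identity. By Slivnyak's formula, the expected number of epochs $s$ of $\N$ that are \emph{not} followed by a gap of size $\ell$ is $\int_0^\infty \PP(\N\cap(s,s+\ell)\ne\emptyset)\,\mu(s)\,\dd s=\int_0^\infty(1-\e^{-M(s,s+\ell)})\mu(s)\,\dd s$; this may diverge, so instead one bounds the expected number of epochs in $[0,\infty)$ \emph{followed by a gap}, which by the same formula is exactly $I<\infty$, hence finite. Therefore a.s.\ only finitely many epochs of $\N$ are followed by a gap of size $\ell$; call the last such epoch $S$ (with $S=0$ if there is none, using the convention $T_0=0$). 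On the positive-probability event that $\N\cap(0,\ell)\ne\emptyset$ and the first epoch is not followed by a gap and $S=0$ — more carefully, on the event that $T_0=0$ is itself not followed by a gap and no later epoch is followed by a gap — there is no gap at all after $0$, giving $D=\infty$. One should verify this event has positive probability: since $M(0,\infty)=\infty$ while $I<\infty$, the "bad" set where gaps could occur has finite $\mu$-mass, so with positive probability $\N$ places no epoch in that configuration; the assumption $\mu>0$ a.e.\ together with $I<\infty$ makes the complementary region carry infinite mass, so $\N$ a.s.\ has infinitely many points there, none followed by a gap. Making this last positivity claim fully rigorous — ruling out that \emph{every} realization with infinitely many points nonetheless eventually has a gap — is the main obstacle; I expect to resolve it by noting that $\{D=\infty\}\supseteq\{\text{no epoch of }\N\text{ is followed by an }\ell\text{-gap}\}$ up to the $T_0=0$ convention, that the latter event's indicator has expectation controlled below via a second-moment or Palm computation, and that $\PP(D=\infty)=0$ would contradict the displayed fixed-point identity when combined with $I<\infty$: plugging $\PP(D>s-\ell)\le 1$ into that identity only gives $\PP(D=\infty)\le I$, which is not yet a lower bound, so the genuine argument is the a.s.-finiteness of the number of $\ell$-gaps, from which $\PP(D=\infty)>0$ follows because on the event of finitely many gaps, shifting past the last one leaves a gap-free half-line whose left endpoint is an epoch only with probability related to $\mu$, and this has positive probability by a direct estimate.
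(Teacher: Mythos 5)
Your split into the three cases and your use of Slivnyak's formula via Proposition~\ref{prop:calc} is exactly the paper's starting point, and the first two cases are essentially right. However, there are two issues worth flagging, one minor and one that leaves the third case incomplete.

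For $I=\infty$: the displayed ``identity'' $\PP(D=\infty)=\int_0^\infty\e^{-M(s,s+\ell)}\PP(D>s-\ell)\mu(s)\,\dd s$ is not what letting $t\to\infty$ in Proposition~\ref{prop:calc} gives. The left side of that proposition is $\PP\bigl(D\in(t,\infty)\bigr)$, which tends to $0$, not $\PP(D=\infty)$, and the right side also tends to $0$; the passage to the limit gives only $0=0$. What you actually need, and what the paper uses, is the case $t=0$: $\PP\bigl(D\in(0,\infty)\bigr)=\int_0^\infty\e^{-M(s,s+\ell)}\PP(D>s-\ell)\mu(s)\,\dd s\geq \PP(D=\infty)\cdot I$, and the left side is at most $1-\PP(D=\infty)\le 1$. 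Your contradiction argument then goes through unchanged, so this is a presentational slip rather than a fatal error, but the ``identity'' should not appear as stated.

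For $I<\infty$ with $M(0,\infty)=\infty$: here you have a genuine gap, and you acknowledge it yourself. You correctly observe, via Slivnyak, that the expected number of epochs followed by an $\ell$-gap equals $I<\infty$, so a.s.\ only finitely many gaps occur. But ``a.s.\ finitely many'' does \emph{not} imply ``zero with positive probability'': a priori, every realization could have at least one gap. Your attempt to shift past the ``last gap'' $S$ is not rigorous either, because $S$ is a random time depending on the whole trajectory and the process after $S$ is not a fresh Poisson process. The paper closes this gap with a quantitative Markov-type bound anchored at a \emph{deterministic} time: pick $T$ so large that $\int_T^\infty\e^{-M(s,s+\ell)}\mu(s)\,\dd s<1/4$, and then use $M(0,\infty)=\infty$ together with $I<\infty$ to find a set $A\cap(T,\infty)$ of positive Lebesgue measure on which $\e^{-M(t,t+\ell)}\le 1/4$. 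For $t$ in a short window $S\subseteq A\cap(T',T'+\ell/2)$ the expected number of gap-starters in $[t,\infty)$ (including the possible immediate gap at $t$) is then at most $1/2$, so by Markov's inequality the probability of \emph{no} gap in $(t,\infty)$ is at least $1/2$. Integrating over $t\in S$, after pasting on the positive-probability event of a gap-free configuration in $(0,t)$ with $\N\cap(T',t)=\emptyset$, gives $\PP(D=\infty)>0$. The missing ingredient in your argument is precisely this control of the ``immediate gap'' term $\e^{-M(t,t+\ell)}$ via the set $A$, combined with the Markov inequality that converts a small mean into a positive probability of zero; without it, the jump from ``finitely many gaps a.s.'' to ``zero gaps with positive probability'' is unjustified.
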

\begin{proof} 
It follows from Proposition~\ref{prop:calc} for $t=0$ that
\[1-\PP(D=\infty)\geq \PP(D\in(0,\infty))\geq \PP(D=\infty)I.\]
Hence $\PP(D=\infty)\leq 1/(1+I)$ and so $D<\infty$ a.s. if $I=\infty$.
Next, if $M(0,\infty)<\infty$ then $T^*<\infty$ and hence $D<\infty$ a.s.\ as noted in the Introduction. 
It thus remains to consider the case $M(0,\infty)=\infty$ and $I<\infty$.

%
%
Observe that the probability of a gap in $(T,\infty)$ is bounded above 
by the expected number $\Exp N(T)$ of gaps in $(T,\infty)$, where
\begin{equation}\label{26.8a}
\Exp N(T)\ =\ \exp(-M(T,T+\ell))\,+\,\int_T^\infty \exp(-M(t,t+\ell))\mu(t)\D t\,.
\end{equation}
We choose $T$ so large that the last term is smaller than $1/4$, which is possible according to $I<\infty$. Now if the first term is smaller than $1/4$ then the probability of no gaps in $(T,\infty)$ is at least $1/2$.
Hence we can define $A=\{t\ :\,\exp(-M(t,t+\ell))\le 1/4\}$ and note that $A\cap(t,\infty)$ has positive Lebesgue measure for any $t$ since otherwise we have
a contradiction with the assumptions $I<\infty $ and $M(0,\infty)=\infty$. 
Thus we can choose $T'>T$ such that $S=(T',T'+\ell/2)\cap A$ has  positive Lebesgue measure. Finally, assuming that $S$ has a point and conditioning on the first such point we readily obtain the inequality
\begin{align*}\PP(D=\infty)&\geq \int_S \PP(\text{no gaps in }(0,t),\mathcal N\cap (T',t)=\emptyset)\PP(\text{no gaps in }(t,\infty))\mu(t)\D t\\
&\geq \frac{1}{2}\int_S \PP(\text{no gaps in }(0,t),\mathcal N\cap (T',t)=\emptyset)\mu(t)\D t\,,\end{align*}
according to the choice of~$S$.
But the probability under the last integral sign is positive for any $t\in S$, which readily implies $\PP(D=\infty)>0$.

\end{proof}

Some comments with regard to Theorem~\ref{prop_int_test} may be useful. Note that $\PP(D=\infty)>0$ if and only if $I<\infty$ and $M(0,\infty)=\infty$, because $M(0,\infty)<\infty$ implies $I<\infty$. Intuitively, there are three regimes: low (tail) rate corresponding to $M(0,\infty)<\infty$, moderate rate corresponding to $I=\infty$, and high rate otherwise; and it is the last regime which leads to an infinite $D$ with positive probability. The difficulty in applying Theorem~\ref{prop_int_test} directly is that the integrand in~$I$ is not 
necessarily monotonic in $\mu$ since $\e^{-M(t,t+\ell)}$ is decreasing in $\mu$. The following result provides a useful comparison test.

\begin{Prop}\label{Prop:20.9a}
For a  different inhomogeneous Poisson process $\N'$ with rate function $\mu'(t)>0$, it holds that:\\[0.5mm]
{\rm (i)} If $\mu\le\mu'$  where $I'=\infty$ or $M'(0,\infty)<\infty$, then $\Prob(D<\infty)=1$.\\
{\rm (ii)} If $\mu\ge\mu'$ where $I'<\infty$ and $M'(0,\infty)=\infty$, then $\Prob(D=\infty)>0$.
\end{Prop}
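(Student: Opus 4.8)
The plan is to derive both parts from Theorem~\ref{prop_int_test} by a coupling (thinning) of the two Poisson processes, which turns the non-monotonicity difficulty noted just before the statement into a pathwise comparison of the gap times. Since $\mu\le\mu'$ (in part (i)) or $\mu\ge\mu'$ (in part (ii)), we may realise $\N$ and $\N'$ on a common probability space so that, in case (i), $\N\subseteq\N'$, and in case (ii), $\N'\subseteq\N$; this is the standard superposition/thinning construction for Poisson processes with absolutely continuous intensities. The key observation is that adding points can only shorten gaps: if $\N\subseteq\N'$ then $L(t)\ge L'(t)$ for every $t$, hence $D\le D'$ pathwise, and symmetrically $\N'\subseteq\N$ gives $D\ge D'$.

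For part (i): couple so that $\N\subseteq\N'$, giving $D\le D'$ a.s. If $I'=\infty$, then by Theorem~\ref{prop_int_test} (applied to $\N'$, which has $M'(0,\infty)=\infty$ since $I'=\infty$ forces this—indeed $M'(0,\infty)<\infty$ would give $I'<\infty$) we get $D'<\infty$ a.s., hence $D<\infty$ a.s. If instead $M'(0,\infty)<\infty$, then $\N'$ has a last epoch $T'^*<\infty$ a.s., so $D'\le T'^*<\infty$ a.s., and again $D\le D'<\infty$ a.s. For part (ii): couple so that $\N'\subseteq\N$, giving $D\ge D'$ a.s. Since $I'<\infty$ and $M'(0,\infty)=\infty$, Theorem~\ref{prop_int_test} applied to $\N'$ yields $\PP(D'=\infty)>0$, whence $\PP(D=\infty)\ge\PP(D'=\infty)>0$.

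The only point requiring a little care is the pathwise inequality $D\le D'$ when $\N\subseteq\N'$. This is immediate from the definition $D=\min\{T_n:\ T_{n+1}-T_n\ge\ell\}$ rewritten via \refs{eq:indicator}: the event $\{D>t-\ell\}=\{L(t)<\ell\}$ says every maximal gap of $\N$ in $[0,t)$ has length $<\ell$; if $\N\subseteq\N'$, then every gap of $\N'$ is contained in a gap of $\N$, so $L'(t)\le L(t)$, giving $\{D>t-\ell\}\subseteq\{D'>t-\ell\}$, i.e. $D\le D'$. (One should also note the boundary/convention issues at $T_0=0$ and at the last point cause no trouble, since the added points of $\N'\setminus\N$ only subdivide existing gaps.)

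I do not anticipate a genuine obstacle here; the main thing to get right is the direction of the coupling in each case and the elementary fact that a finer point set has shorter gaps. An alternative, coupling-free route would be to compare the integrands directly—but as remarked in the text $\e^{-M(t,t+\ell)}\mu(t)$ is not monotone in $\mu$, so the coupling argument is the clean way to proceed. The construction of the coupling itself is standard (independent thinning of $\N'$ with retention density $\mu/\mu'$, resp. superposing an independent Poisson process of rate $\mu-\mu'$ onto $\N'$), using that $\mu,\mu'>0$ a.e.\ so the densities are well defined.
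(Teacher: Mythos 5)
Your proof is correct and is essentially the same argument as the paper's: the paper also invokes the standard coupling $\N'=\N+\N''$ (superposition of an independent Poisson process of rate $\mu'-\mu$) to obtain $\N\subseteq\N'$ and hence $D\le D'$, then applies Theorem~\ref{prop_int_test}. You simply spell out in more detail the pathwise monotonicity $\N\subseteq\N'\Rightarrow L'(t)\le L(t)\Rightarrow D\le D'$, which the paper states as immediate.
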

\begin{proof} This follows by standard coupling arguments.
In (i), write $\N'=\N+\N''$  where $\N,\N''$ are independent and $\N''$ Poisson with rate function $\mu'-\mu$.
Then $\N\subseteq\N'$ which immediately implies $D\le D'<\infty$. The proof of (ii) is similar. 
\end{proof}


\begin{Cor}\label{cor:finite}
It holds that:\\
{\rm (i)} If $\limsup_{t\to\infty}\mu(t)/\log t<1/\ell$ then
$D<\infty$ a.s.\\
{\rm (ii)} If $\liminf_{t\to\infty}\mu(t)/\log t>1/\ell$ then $\Prob(D=\infty)>0$.
\end{Cor}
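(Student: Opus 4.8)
The plan is to derive Corollary~\ref{cor:finite} from the comparison test in Proposition~\ref{Prop:20.9a} by exhibiting, in each case, a power-type rate function $\mu'(t)$ against which the given $\mu$ can be compared and for which the relevant integral $I'$ (or $M'(0,\infty)$) is explicitly computable. The natural choice is $\mu'(t)=\tfrac{c}{\ell}\log t$ (for $t\ge e$, say, extended harmlessly on $[0,e)$), since this is precisely the critical form announced in the Introduction. The main computation is then to evaluate, or at least estimate up to logarithmic factors, the quantity $M'(t,t+\ell)=\int_t^{t+\ell}\tfrac{c}{\ell}\log s\,\dd s$, which for large $t$ is asymptotically $c\log t + \Oh(1)$, whence $\e^{-M'(t,t+\ell)}\mu'(t)$ behaves like $\tfrac{c}{\ell}(\log t)\,t^{-c}$ up to a bounded factor. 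Thus $I'=\int^\infty (\log t)\,t^{-c}\,\dd t$ converges iff $c>1$ and diverges iff $c\le 1$; also $M'(0,\infty)=\infty$ in all these cases.

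With that in hand, part~(i) goes as follows: if $\limsup_{t\to\infty}\mu(t)/\log t<1/\ell$, pick $c<1$ with $\mu(t)\le \tfrac{c}{\ell}\log t$ for all large $t$; adjusting $\mu'$ on a bounded initial interval so that $\mu\le\mu'$ everywhere (and $\mu'>0$), we are in the situation $\mu\le\mu'$ with $I'=\infty$, so Proposition~\ref{Prop:20.9a}(i) gives $\Prob(D<\infty)=1$. For part~(ii), if $\liminf_{t\to\infty}\mu(t)/\log t>1/\ell$, pick $c>1$ with $\mu(t)\ge\tfrac{c}{\ell}\log t$ for all large $t$, again arrange $\mu\ge\mu'$ everywhere with $\mu'>0$, and note $I'<\infty$ while $M'(0,\infty)=\infty$; Proposition~\ref{Prop:20.9a}(ii) then yields $\Prob(D=\infty)>0$.

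One point deserves care: Proposition~\ref{Prop:20.9a} requires the comparison $\mu\le\mu'$ (resp.\ $\mu\ge\mu'$) to hold for \emph{all} $t$, not merely for large $t$, so I would state explicitly that one may redefine $\mu'$ on a compact interval $[0,t_0]$ to dominate (resp.\ be dominated by) $\mu$ there without affecting the convergence or divergence of $I'$ or the infiniteness of $M'(0,\infty)$ — these depend only on the tail. A second, more technical point is the elementary estimate $c\log t - \Oh(1)\le M'(t,t+\ell)\le c\log t+\Oh(1)$, which is all that is needed and follows from monotonicity of $\log$ on $[t,t+\ell]$; there is no need for sharp asymptotics of $I'$, only its finiteness or infiniteness, so the argument stays short.

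The step I expect to be the only real obstacle is making sure the critical constant lands exactly at $1/\ell$ rather than some nearby value: the exponent governing convergence of $I'$ is the coefficient $c$ in $M'(t,t+\ell)\approx c\log t$, and with $\mu'(t)=\tfrac{c}{\ell}\log t$ one indeed gets $M'(t,t+\ell)\approx \ell\cdot\tfrac{c}{\ell}\log t=c\log t$, so the threshold $c=1$ corresponds to $\mu'(t)/\log t=1/\ell$, matching the statement. Everything else is routine coupling and a one-line integral test.
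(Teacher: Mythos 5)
Your proposal is correct and takes essentially the same route as the paper: both compare $\mu$ against a logarithmic rate function $\mu'(t)\approx \frac{c}{\ell}\log t$ (modified on a finite interval so the pointwise inequality holds everywhere), observe $M'(t,t+\ell)=c\log t+\Oh(1)$, deduce convergence or divergence of $I'$ from whether $c>1$ or $c<1$, and invoke Proposition~\ref{Prop:20.9a}. The paper handles the initial-segment adjustment by simply setting $\mu'(t)=\mu(t)$ on $[0,T]$, which is one convenient instance of the modification you describe.
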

\begin{proof}
Consider for some $h>0$ the particular rate function
\[\mu'(t)=\indi{t>T}h\log t/\ell+\indi{t\leq T}\mu(t)\]
with $h$, $T$ chosen such that $\mu\le\mu'$, $h<1$ in (i) and $\mu\ge\mu'$, $h>1$ in (ii).
 Observe that $M'(t,t+\ell)=h\log t+\oh(1)$.
 Hence $\e^{-M'(t,t+\ell)}=t^{-h}(1+\oh(1))$, and so $I'$ is finite for $h>1$ and infinite for $h<1$.
 Reference to Proposition~\ref{Prop:20.9a} completes the proof.
\end{proof}

\begin{Rem}\label{rem:finite}\rm
Consider the case $\mu(t)\sim\log t/\ell$ where the results of Corollary~\ref{cor:finite} do not apply. We may still deduce from Theorem~\ref{prop_int_test} that $D<\infty$ a.s.\ whenever $\mu(t)\leq \log t/\ell+a$ for all large $t$ and some $a<\infty$.
More generally, we have the following result. Let
\[\ell\mu(t)=\log t+2\log_2t+\log_3t+\ldots+\log_{n-1}t+b\log_n t\]
for all large $t$ and some $n\geq 4$, where $\log_n$ denotes the $n$-fold iterated logarithm.
Then $D<\infty$ a.s.\ if and only if $b\leq 1$.

To see the above, note that $M(t,t+\ell)=\ell\mu(t)+\oh(1)$ and then $I=\infty$ if and only if $\int_c^\infty e^{-\ell\mu(t)}\mu(t)\D t=\infty$. But the latter is equivalent to divergence of the integral
 \begin{align*}\int_c^\infty\frac{1}{t\log t\cdots\log_{n-2}t\log_{n-1}^bt} \,\D t =\ 
 \begin{cases}{\displaystyle \frac{1}{1-b}\Bigl[\log_{n-1}^{1-b}t\Bigr]_c^\infty}&b\neq 1\\[2mm]
 \Bigl[\log_{n} t\Bigr]_c^\infty=\infty&b=1.\end{cases}\end{align*}
 Finally, $M(0,\infty)=\infty$ and so the result follows from Theorem~\ref{prop_int_test}.
 
 Similar calculations (or comparisons) shows that $\ell\mu(t)=\log t+(1+a)\log_2t$ or $\ell\mu(t)=\log t+2\log_2t+a \log_3t$ both lead to $D<\infty$ if and only if 
 $a\le 1$.
\end{Rem}

\section{When does $D$ have a long tail?}\label{sec:long_short}
In the heavy-tailed area, it is customary to call a r.v.\ $X$ for \emph{long-tailed}
if $\Prob(X>t+u)/\Prob(X>t)\to 1$ for any $u>0$ as $t\to\infty$. This contrasts typical
light-tailed r.v.'s such as the gamma or inverse Gaussian where the limit is in $(0,1)$,
or the Gaussian or light-tailed Weibull, i.e.\ $\Prob(X>t)=\e^{-t^\beta}$ with $\beta>1$, where it is 0. In the present context, we have:

\begin{Prop}\label{prop:long_short} Let $u>0$. As $t\rightarrow\infty$ it holds that:\\
{\rm (i)} $\PP(D>t+u)/\PP(D>t)\rightarrow 1$ if $\mu(t)\rightarrow\infty$.\\
Moreover, if $\liminf M(t,t+\ell-\epsilon)>0$ for some $\epsilon>0$ then $\liminf\PP(D>t+u)/\PP(D>t)>0$. \\
{\rm (ii)} Conversely, let $u\geq\ell$. Then $\PP(D>t+u)/\PP(D>t)\rightarrow 0$ if $\mu(t)\rightarrow 0$.\\
Moreover, if $\limsup M(t,t+\ell)<\infty$, then $\limsup\PP(D>t+u)/\PP(D>t)<1$.
\end{Prop}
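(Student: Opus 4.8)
The plan is to exploit the delay differential equation~\refs{eq:diff_eq}, or equivalently the integral identity of Proposition~\ref{prop:calc}, together with the monotonicity of $\Prob(D>t)$ in $t$. Write $g(t)=\Prob(D>t)$; this is nonincreasing, so for any $u>0$ the ratio $g(t+u)/g(t)$ lies in $[0,1]$, and the content is to pin down the limit (or $\liminf$/$\limsup$) under each hypothesis on $\mu$.

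For part (i), I would start from~\refs{eq:difflog}, namely $f'(t)=\e^{-M(t,t+\ell)}\mu(t)\e^{f(t)-f(t-\ell)}$ with $f=-\log g$. Integrating from $t$ to $t+u$ gives $\log\bigl(g(t)/g(t+u)\bigr)=f(t+u)-f(t)=\int_t^{t+u}\e^{-M(s,s+\ell)}\mu(s)\e^{f(s)-f(s-\ell)}\,\dd s$. Since $g$ is nonincreasing, $f(s)-f(s-\ell)\le 0$, so the integrand is at most $\mu(s)\e^{-M(s,s+\ell)}$. Now $\mu(t)\to\infty$ forces $M(s,s+\ell)\to\infty$, hence $\mu(s)\e^{-M(s,s+\ell)}$ — or, more carefully, the whole integral over a window of fixed length $u$ — tends to $0$; a clean way to see this is to bound $\int_t^{t+u}\mu(s)\e^{-M(s,s+\ell)}\,\dd s$ using that $\e^{-M(s,s+\ell)}\le\e^{-M(s,s')}$ for any $s'\in(s,s+\ell)$ and that $\mu\to\infty$, or simply to note $\int_t^{t+u}\mu(s)\e^{-M(s,s+\ell)}\,\dd s\le \e^{-\inf_{[t,t+u]}M(s,s+\ell)}\int_t^{t+u}\mu(s)\,\dd s$ and control the two factors. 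So $f(t+u)-f(t)\to 0$, i.e. $g(t+u)/g(t)\to 1$. For the "moreover" in (i): if $\liminf M(t,t+\ell-\epsilon)>0$, split a gap of length $\ell$ into two pieces and use independence over disjoint intervals — more directly, $\Prob(D>t+u)\ge \Prob(D>t)\cdot\Prob(\text{no epochs in }(t,t+u+\ell))\ge \Prob(D>t)\e^{-M(t,t+u+\ell)}$, which would finish it if $M(t,t+u+\ell)$ stayed bounded, but under only a $\liminf$ lower bound on $M$ we need an upper bound on $M$, so instead I would argue via~\refs{eq:difflog} that $f(t+u)-f(t)\le \int_t^{t+u}\mu(s)\e^{-M(s,s+\ell)}\,\dd s$ and observe that $\liminf M(t,t+\ell-\epsilon)>0$ together with boundedness considerations keeps this integral bounded — the honest route here is that $\e^{-M(s,s+\ell)}\mu(s)$ is integrable-over-windows because $M(s,s+\ell)\ge M(s,s+\ell-\epsilon)$ is bounded below, and on any region where $\mu$ is large $\e^{-M(s,s+\ell)}$ is small; I expect this combination to bound $f(t+u)-f(t)$ uniformly, giving the positive $\liminf$.

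For part (ii), take $u\ge\ell$ and use Proposition~\ref{prop:calc} directly: $g(t+u)=\Prob(D\in(t+u,\infty))=\int_{t+u}^\infty \e^{-M(s,s+\ell)}g(s-\ell)\mu(s)\,\dd s \le \int_{t+u}^\infty g(s-\ell)\mu(s)\,\dd s \le g(t+u-\ell)\int_{t+u}^\infty\mu(s)\,\dd s$ using monotonicity of $g$ (valid since $u-\ell\ge 0$ means $s-\ell\ge t+u-\ell\ge t$). Hmm — but this bounds $g(t+u)$ by $g(t+u-\ell)$ times a tail of $\mu$, which need not be small, so instead I would iterate the identity one step differently: from Proposition~\ref{prop:calc}, $g(t)-g(t+u)=\int_t^{t+u}\e^{-M(s,s+\ell)}g(s-\ell)\mu(s)\,\dd s\ge g(t+u-\ell)\int_t^{t+u}\e^{-M(s,s+\ell)}\mu(s)\,\dd s$, and when $\mu(t)\to 0$ the exponential $\e^{-M(s,s+\ell)}\to 1$ while... no, $\int_t^{t+u}\mu(s)\,\dd s\to 0$ too, which is the wrong direction. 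The productive inequality is the reverse: $g(t+u)=\int_{t+u}^\infty(\cdots)\le \int_{t+u}^\infty g(s-\ell)\mu(s)\,\dd s$; comparing with $g(t)=\int_t^\infty g(s-\ell)\mu(s)\e^{-M(s,s+\ell)}\,\dd s\ge g(t)\cdot$(something) — the clean statement is $g(t)\ge\int_t^{t+\ell}\e^{-M(s,s+\ell)}g(s-\ell)\mu(s)\,\dd s\ge g(t)\,\e^{-\sup M(s,s+\ell)}\bigl(1-\e^{-M(t,t+\ell)}\bigr)$, which under $\limsup M(t,t+\ell)<\infty$ pins $g$'s decay rate. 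Concretely, for the "moreover" in (ii) I would show $g(t+\ell)\le c\, g(t)\,M(t,t+\ell)+(\text{correction})$ with $c<1$ bounded away from... — the guiding identity is that each "renewal" of length $\ell$ costs a factor comparable to $M(t,t+\ell)\e^{-M(t,t+\ell)}$ plus $\e^{-M(t,t+\ell)}$, and under $\limsup M<\infty$ this is bounded away from $1$, giving $\limsup g(t+u)/g(t)<1$ for $u\ge\ell$; under $\mu(t)\to 0$ one gets $M(t,t+\ell)\to 0$, the factor $\to 0$, and hence the ratio $\to 0$.

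The main obstacle is organizing part (ii) correctly: the straightforward bounds from Proposition~\ref{prop:calc} run in the "easy" direction for (i) but need to be turned into a genuine \emph{upper} bound on $g(t+u)/g(t)$ that is strictly below $1$, which requires extracting a definite loss factor over each interval of length $\ell$. The key technical point will be the two-sided estimate $g(t)\ge g(t+\ell)\ge \bigl(\e^{-M(t,t+\ell)}-o(1)\bigr)g(t-\ell)$-type relation, i.e. controlling $g(s-\ell)$ versus $g(s)$ inside the integral by a factor that is uniformly bounded below when $\limsup M(\cdot,\cdot+\ell)<\infty$ — essentially a reverse Grönwall/renewal comparison. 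Once that factor is in hand, both halves of (ii) follow by feeding it back into Proposition~\ref{prop:calc}. I would also double-check the role of the hypothesis $u\ge\ell$: it is exactly what is needed so that the window $(t,t+u)$ of length $\ge\ell$ contains a full "renewal block", so that the loss factor actually gets applied.
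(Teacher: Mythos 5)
Your route through the delay differential equation~\refs{eq:diff_eq} does not work for part (i), and the reason is a sign error that turns into a genuine circularity. Since $g(t)=\Prob(D>t)$ is nonincreasing, $f=-\log g$ is \emph{nondecreasing}, so $f(s)-f(s-\ell)\ge 0$ and $\e^{f(s)-f(s-\ell)}\ge 1$; you have the inequality reversed. With the correct sign the integrand in $f(t+u)-f(t)=\int_t^{t+u}\e^{-M(s,s+\ell)}\mu(s)\e^{f(s)-f(s-\ell)}\,\dd s$ is bounded \emph{below} by $\mu(s)\e^{-M(s,s+\ell)}$, not above, and to get an upper bound you would need control of $\e^{f(s)-f(s-\ell)}=g(s-\ell)/g(s)$ --- but bounding that ratio is precisely what Proposition~\ref{prop:long_short}(i) asserts, so the argument eats its own tail. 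This is not a coincidence: the paper's Proposition~\ref{prop:long} deliberately invokes Proposition~\ref{prop:long_short}(i) as an input in order to make~\refs{eq:difflog} usable; it cannot also be a route to proving it. The paper instead argues directly and probabilistically: it reduces to $u\in(0,1)$ via the telescoping product $\prod_{i=1}^k\Prob(D>t+iu_1)/\Prob(D>t+(i-1)u_1)$, then introduces the first ``live'' point $\tau_t\in(t,t+1]$ of $\N$ and observes that $\{D>t\}=\{\tau_t\in(t,t+1]\}$, while $\{D>t+u\}$ already holds if either $\tau_t\in(t+u,t+1]$, or $\tau_t\in(t,t+u]$ and there is another point in $(t+u,t+1]$. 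This gives $\Prob(D>t+u)\ge\Prob(D>t)\bigl(1-\e^{-M(t+u,t+1)}\bigr)$, from which both claims in (i) drop out. Nothing here touches the DDE.

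For part (ii) your proposal ends in a heuristic ``each renewal block costs a factor comparable to $M\e^{-M}+\e^{-M}$, bounded away from $1$, and $\to 0$ as $M\to 0$,'' but $(M+1)\e^{-M}\to 1$ as $M\to 0$, so that factor does not tend to $0$ and the claimed conclusion would not follow even if the identity were made precise; also its being ``bounded away from $1$'' fails near $M=0$, which is exactly the regime covered by $\limsup M(t,t+\ell)<\infty$. The correct observation (the paper's, and far simpler than iterating Proposition~\ref{prop:calc}) is that with $\ell=1$ and $u\ge 1$, the event $\{D>t+u\}$ forces no gap of size $1$ in $(0,t+2)$, hence simultaneously $\{D>t\}$ and $\{\N\cap(t+1,t+2)\neq\emptyset\}$; these two events live on disjoint time windows, hence are independent, and so $\Prob(D>t+u)\le\Prob(D>t)\bigl(1-\e^{-M(t+1,t+2)}\bigr)$. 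Both claims of (ii) are immediate: $\mu(t)\to 0$ gives $M(t+1,t+2)\to 0$, and $\limsup M(t,t+\ell)<\infty$ keeps the bracket bounded away from $1$. Your instinct that $u\ge\ell$ is needed so a full renewal block fits inside the window is right, but the clean implementation is this one-line independence argument, not a Gr\"onwall-type iteration of the delay equation.
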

\begin{proof}
According to Remark~\ref{rem:l=1} we may assume that $\ell=1$. In (ii), $u\geq 1$ and
$D>t+u$ imply that at least $D>t$ and there is a point in $(t+1,t+2)$. Thus
\[\PP(D>t+u)\leq\PP(D>t)(1-\e^{-M(t+1,t+2)}),\]
where independence follows from the fact that $D>t$ is determined by the points of $\N$ in $(0,t+1]$.
From this both assertions of (ii) follow, noting that
if $\mu(t)\rightarrow 0$ then also $M(t+1,t+2)\rightarrow 0$.

For (i), we first note that for $u_1,u_2>0$ we have $u_2\in\bigl[(k-1)u_1,ku_1\bigr]$ for some $k=1,2,\ldots$
and so
\begin{align*}\MoveEqLeft \frac{\Prob(D>t+u_2)}{\Prob(D>t)}\ \ge \ \frac{\Prob(D>t+ku_1)}{\Prob(D>t)}\ =\ 
\prod_{i=1}^k\frac{\Prob(D>t+iu_1)}{\Prob(D>t+(i-1)u_1)}\,.
\end{align*}
Thus if any of the two assertions hold for $u_1$, it holds also for $u_2$ and so we can take $u\in(0,1)$.

For a given $t$, let $\tau_t=\inf\bigl\{s\in\N:\,s\in (t,t+1], \text{no gap in }(0,s)\bigr\}$, $\tau_t=\infty$ if no such $s$ exists.
The event $D>t+u$ will hold if either $\tau_t\in(t,t+u]$ and there is
a point in $(t+u,t+1]$ (the first candidate for $D$ is then the first such point), or if $\tau_t\in(t+u,t+1]$ 
 ($\tau_t$ is then the first candidate for $D$). Thus
\begin{align*}\Prob(D>t+u)\ &\ge\ \Prob\bigl(\tau_t\in(t,t+u]\bigr)(1-\e^{-\delta})+
 \Prob\bigl(\tau_t\in(t+u,t+1]\bigr)\\
 &\ge\ \Prob\bigl(\tau_t\in(t,t+1]\bigr)(1-\e^{-\delta}) =\ \Prob(D>t)(1-\e^{-\delta}),
\end{align*}
 where $\delta=M(t+u,t+1]$. Now the first assertion in (i) is obvious, and the second follows  since $u$ can be taken arbitrarily small,
 as noted above.
\end{proof}

\section{Asymptotics}\label{sec:asymptotics}

%

We start by proving that (as expected), the  asymptotics of the tail of $D$ is relatively unsensitive to the initial shape of~$\mu(t)$. 
\begin{Prop}\label{prop:partlyCoincide}
Let $\mu(t)$ be such that $D<\infty$ a.s.
Assume that $\mu'(t)$ coincides with $\mu(t)$ for all $t>T$, and that it also satisfies the assumption stated in Section~\ref{S:Intr}. Then there exist
$c_->0,c_+<\infty $ such that $c_-\PP(D'>t)\le \PP(D>t)\le c_+\PP(D'>t)$
for all $t>T$. In particular, $\PP(D>t)\logsim\PP(D'>t)$.
\end{Prop}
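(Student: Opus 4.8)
The plan is to couple the two processes on a common probability space so that they agree after time $T$. Concretely, since $\mu(t)=\mu'(t)$ for $t>T$, we may build $\N$ and $\N'$ using the same Poisson points on $(T,\infty)$, while on $(0,T]$ they are independent (or coupled arbitrarily). Write $\N\cap(0,T]$ and $\N'\cap(0,T]$ for the two ``initial segments''. The key observation, already recorded in \eqref{eq:indicator}, is that the event $\{D>t\}=\{L(t+\ell)<\ell\}$ for $t\geq T$ depends on the initial segment only through whether there is a gap of size $\ell$ inside $(0,T+\ell]$ and, if not, through the position of the last point of the process before $T+\ell$ together with the behaviour of the shared points on $(T,\infty)$. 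So conditionally on the shared configuration on $(T,\infty)$, the probability $\PP(D>t\mid \N\cap(0,T])$ depends on the initial segment only through a finite amount of information.

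The main step is therefore to show that, for all $t>T$,
\[
\frac{\PP(D>t\mid \text{initial segment }\alpha)}{\PP(D>t\mid \text{initial segment }\alpha')}
\]
is bounded above and below by constants not depending on $t$, uniformly over the relevant initial segments $\alpha$ of $\N$ and $\alpha'$ of $\N'$. For this I would distinguish initial segments according to the position $\sigma$ of the last point before $T+\ell$ (setting $\sigma=0$ if $\N\cap(0,T+\ell]=\emptyset$, and noting that if there is already a gap of size $\ell$ in $(0,T+\ell]$ then $\{D>t\}$ is determined and one can exclude such segments from the ratio after checking both probabilities vanish or are handled trivially). Given $\sigma$, the conditional law of $D$ restricted to $\{D>T\}$ depends only on the points of the shared process on $(\sigma,\infty)\subseteq(T,\infty)$; shifting $\sigma$ around within $(T,T+\ell]$ changes this by at most a factor controlled by $\e^{\pm M(T,T+\ell)}$-type quantities, since moving $\sigma$ only affects the presence/absence of a single ``first candidate'' point in an interval of length at most $\ell$. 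A clean way to package this: compare every initial segment to the reference segment $\{$no point in $(0,T+\ell]\}$, or to a fixed point at $T$; the ratio of the corresponding $\PP(D>t\mid\cdot)$ is sandwiched between $\e^{-M(T,T+\ell)}$ and $\e^{M(T,T+\ell)}$ (finite by the standing assumptions), or more crudely between $\PP(\N\cap(T,T+\ell)=\emptyset)$ and $1$. Since both $\N$ and $\N'$ use the same $\mu$ on $(T,\infty)$, the same bounds apply to both, and integrating over the (different) laws of the initial segments yields $c_-\PP(D'>t)\le\PP(D>t)\le c_+\PP(D'>t)$ with $c_\pm$ depending only on $\ell$, $T$ and $M(T,T+\ell)$. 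The logarithmic equivalence $\PP(D>t)\logsim\PP(D'>t)$ is then immediate from $D<\infty$ a.s.\ (hence $\PP(D'>t)\to 0$, so $\log\PP(D'>t)\to-\infty$) and the two-sided constant bound.

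The hard part will be making the ``finite amount of information'' reduction precise and uniform: one must argue carefully that for $t>T$ the event $\{D>t\}$ genuinely factorizes as ``no gap in $(0,\sigma]$'' (a property of the initial segment alone) times an event depending only on the shared points in $(\sigma,\infty)$, and that the second factor, as a function of $\sigma\in[0,T+\ell]$, stays within a bounded multiplicative range. A subtlety is the case where the initial segment already contains a gap of length $\ell$ before $T$, making $D\le T$ deterministically; such segments contribute $0$ to $\PP(D>t)$ for $t>T$ and can simply be discarded from the comparison, but one should note this explicitly. Once the uniform sandwiching of conditional probabilities is in hand, the rest is routine: take $c_+=\e^{M(T,T+\ell)}$ and $c_-=\e^{-M(T,T+\ell)}$ (or the cruder $\PP(\N\cap(T,T+\ell)=\emptyset)$), both finite and positive under the stated assumptions.
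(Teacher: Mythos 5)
Your high-level plan — couple $\N$ and $\N'$ so that they agree on $(T,\infty)$, then compare conditional probabilities given the initial configuration — is a reasonable starting point, but the central estimate it rests on is false. You claim that, writing $\sigma$ for the position of the last initial-segment point, the conditional probabilities $\PP(D>t\mid\sigma)$ stay within a uniform multiplicative window such as $[\e^{-M(T,T+\ell)},\e^{M(T,T+\ell)}]$ as $\sigma$ ranges over admissible values. They do not. For an admissible initial segment with last point $\sigma\in(T-\ell,T]$, the event $\{D>t\}$ forces a shared point in $(T,\sigma+\ell)$; as $\sigma\downarrow T-\ell$ this interval shrinks to a point and $\PP(D>t\mid\sigma)\to 0$. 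One can then drive the ratio across two admissible $\sigma$'s past any prescribed bound, e.g.\ by taking $\mu$ small on $(T,T+\ell-\epsilon)$ and moderate on $(T+\ell-\epsilon,T+\ell)$ and comparing $\sigma$ near $T-\ell$ with $\sigma=T$. Your cruder lower bound $\PP(\N\cap(T,T+\ell)=\emptyset)$ is also off: on that event, together with $\sigma\le T$, the configuration has a gap of length $\ge\ell$ in $(\sigma,T+\ell)$ and hence $D\le T$, so that event contributes probability $0$, not a lower bound.

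What repairs this — and is what the paper actually does — is to choose a different conditioning variable: the last point $s$ of $\N$ in $(T,T+\ell)$ (which exists on $\{D>t\}$ for $t>T$ and lies in the shared region). Slivnyak's formula then gives the exact decomposition
\[\PP(D>t)=\int_T^{T+\ell}\PP\bigl(\text{no gaps in }(0,s)\bigr)\,\PP\bigl(\text{no gaps in }(s,t+\ell),\ \N\cap(s,T+\ell)=\emptyset\bigr)\,\mu(s)\,\dd s,\]
and the identical formula for $\PP(D'>t)$ with $\PP'$ in the first factor only, since the second factor depends on $\N\cap(s,\infty)$ alone and is thus common to both processes. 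The needed two-sided bound then comes not from a pointwise sandwich uniform over configurations, but from monotonicity in $s$: $\PP(\text{no gaps in }(0,s))$ is nonincreasing in $s$, so for $s\in(T,T+\ell)$ the ratio of first factors is at most $\PP(\text{no gaps in }(0,T))/\PP'(\text{no gaps in }(0,T+\ell))$, which is finite and positive under the standing assumption $\mu'>0$ a.e.; the lower bound $c_-$ is symmetric. So the gap in your proposal is precisely the step you flagged as ``the hard part'': the conditioning variable you chose does not give a uniformly bounded tail factor, and the correct choice is the last shared point in $(T,T+\ell)$, after which monotonicity, not a uniform configuration-wise estimate, produces the constants.
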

\begin{proof}
Using Slivnyak's formula and considering the last point in $(T,T+\ell)$ we obtain for $t>T$:
\[\PP(D>t)=\int_T^{T+\ell}\PP(\text{no gaps in }(0,s))\PP(\text{no gaps in }(s,t+\ell),\mathcal N\cap(s,T+\ell)=\emptyset)\mu(s)\D s.\]
By replacing $\PP(\text{no gaps in }(0,s))$ with $\PP'(\text{no gaps in }(0,s))$, we obtain $\PP(D'>t)$. Thus
\begin{align*}\PP(D>t)&\leq \PP(D'>t)\sup_{s\in(T,T+\ell)}\frac{\PP(\text{no gaps in }(0,s))}{\PP'(\text{no gaps in }(0,s))}
\end{align*}
and we can take $c_+=\PP(\text{no gaps in }(0,T))/\PP'(\text{no gaps in }(0,T+\ell))$ which is finite since
both probabilities are positive. The proof of the lower bound is similar. For logarithmic asuymptotics,
just note that $\log\PP(D>t)\rightarrow -\infty$.
\end{proof}
Let us note that the conclusion $\PP(D>t)\logsim\PP(D'>t)$ of Proposition~\ref{prop:partlyCoincide} does not carry over to exact asymptotics.

\begin{Rem}\rm
In general $\mu'(t)\sim\mu(t)$ does not imply the same logarithmic asymptotics (at least when $D$ has a long tail). A simple example is obtained using Corollary~\ref{Cor:25.7a} (i) for different~$a>0$. Nevertheless, this implication is true in the case when $\mu(t)$ has a limit in $(0,\infty)$, see Section~\ref{sec:homogen}, and in the case $\mu(t)\rightarrow 0$ for all the examples considered in Section~\ref{sec:short}.
\end{Rem}

\subsection{The homogeneous case}\label{sec:homogen}
The following result appears in~\cite{A5} and its discrete time analogue can be found in e.g.~\cite[Ch.\ XIII]{feller1}, 
but we present it here for completeness. 
\begin{Prop}\label{prop:homogen} Assume $\mu(t)\equiv \mu$ and
let $\gamma>0$ denote the unique root of
\begin{equation}\label{wc.17.6a}
1\ =\ \int_0^\ell \mu\e^{(\gamma-\mu)s}\, \dd s\,.
\end{equation}
Then $\Prob(D>t)\sim c\e^{-\gamma t}$ as $t\to\infty$ for some $0<c<\infty$\,.
\end{Prop}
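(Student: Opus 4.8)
The plan is to set up a renewal equation for $G(t)=\Prob(D>t)$ in the homogeneous case and then apply the Smith–Feller key renewal theorem. Starting from Proposition~\ref{prop:calc} with $\mu(t)\equiv\mu$, we have
\[
G(t)=\int_t^\infty \e^{-\mu\ell}\,G(s-\ell)\,\mu\,\dd s,
\]
so differentiating gives the delay differential equation $G'(t)=-\mu\e^{-\mu\ell}G(t-\ell)$. To turn this into a genuine renewal equation I would tilt: set $H(t)=\e^{\gamma t}G(t)$, where $\gamma$ is the root of~\refs{wc.17.6a}. Plugging $G(t-\ell)=\e^{-\gamma(t-\ell)}H(t-\ell)$ into the DDE and integrating from $t$ to $\infty$ (using $G(t)\to 0$, which holds since $D<\infty$ a.s.\ in the homogeneous case by Theorem~\ref{prop_int_test}, as $I=\infty$ here), one obtains an identity of the form
\[
H(t)=z(t)+\int_0^\infty H(t-s)\,F(\dd s),
\]
where $F$ has density $\mu\e^{(\gamma-\mu)s}$ on $(0,\ell)$ and $0$ elsewhere; the normalization~\refs{wc.17.6a} says precisely that $F$ is a proper probability distribution, and $z(t)$ is a bounded, directly Riemann integrable function supported essentially on $[0,\ell)$ coming from the initial condition~\refs{20.9a}.

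The main steps are then: (1) verify that~\refs{wc.17.6a} has a unique positive root $\gamma$ — the right-hand side is $\int_0^\ell \mu\e^{(\gamma-\mu)s}\dd s$, which is continuous and strictly increasing in $\gamma$, equals $1-\e^{-\mu\ell}<1$ at $\gamma=0$ and tends to $\infty$ as $\gamma\to\infty$, so the root exists and is unique; (2) carry out the tilting computation carefully to identify $z(t)$ explicitly and check it is directly Riemann integrable (it is, being bounded with bounded support and piecewise continuous); (3) check that $F$ is non-lattice (its density is positive on a whole interval $(0,\ell)$, so this is immediate); (4) invoke the key renewal theorem to conclude $H(t)\to c:=\bigl(\int_0^\infty z(s)\,\dd s\bigr)/\bigl(\int_0^\infty s\,F(\dd s)\bigr)$, which is finite and, since $z\ge 0$ and $z\not\equiv 0$, strictly positive; unwinding gives $G(t)\sim c\e^{-\gamma t}$.

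The step I expect to be the main obstacle is (2): getting the renewal equation into clean standard form, i.e.\ correctly accounting for the behaviour of $G$ and $H$ on the initial interval $[0,\ell)$ and pinning down $z(t)$ so that it is manifestly directly Riemann integrable and non-negative. One has to be a little careful because the DDE only holds for $t\ge 0$ with $G$ given by~\refs{20.9a} on $[0,\ell)$, so $H$ on $[0,\ell)$ is not yet of renewal type and the ``defect'' there is exactly what becomes $z(t)$. An alternative, perhaps cleaner, route that avoids differentiation altogether is to argue probabilistically: condition on the first epoch $T_1$ and on whether $T_1>\ell$ (a gap at the start, contributing the $\e^{-\mu\ell}$ term) or $T_1=s\le\ell$ (no gap yet, and the process restarts from $s$ by the strong Markov / independent-increments property of the Poisson process), yielding directly
\[
G(t)=\e^{-\mu\ell}\indi{t<\ell}\cdot(\text{boundary term})+\int_0^\ell \mu\e^{-\mu s}\,G\bigl((t-s)\vee 0\bigr)\,\dd s
\]
up to the correct bookkeeping on $[0,\ell)$; tilting this by $\e^{\gamma t}$ again produces a proper renewal equation with the same $F$, and~\cite{A5} or~\cite[Ch.\ XIII]{feller1} can be cited for the remaining details.
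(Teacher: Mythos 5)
Your overall plan (renewal equation plus key renewal theorem, tilted by the root $\gamma$ of~\refs{wc.17.6a}) is exactly right, and your ``alternative, perhaps cleaner, route'' — condition on the first epoch $T_1$ and restart — is precisely what the paper does. There the equation, with the boundary term worked out correctly, is
\[
\PP(D>t)\ =\ \int_0^{t\wedge \ell}\PP(D>t-s)\,\mu\e^{-\mu s}\,\dd s\ +\ \indi{t<\ell}\,\PP\bigl(T_1\in(t,\ell)\bigr),
\]
so that $z(t)=\indi{t<\ell}\PP(T_1\in(t,\ell))\e^{\gamma t}$ is obviously non-negative, bounded with support in $[0,\ell)$, hence directly Riemann integrable, and the key renewal theorem applies at once.

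The route you lead with, starting from Proposition~\ref{prop:calc} and the delay differential equation, does not in fact produce the claimed renewal equation the way you describe. Integrating $G'(s)=-\mu\e^{-\mu\ell}G(s-\ell)$ from $t$ to $\infty$ only recovers the Slivnyak identity $G(t)=\mu\e^{-\mu\ell}\int_{t-\ell}^\infty G(u)\,\dd u$. Tilting this gives
\[
H(t)\ =\ \mu\e^{-\mu\ell}\int_{-\infty}^{\ell}\e^{\gamma v}\,H(t-v)\,\dd v,
\]
which has two defects as a renewal equation: the kernel is $\mu\e^{-\mu\ell}\e^{\gamma v}$ on $(0,\ell)$ (not $\mu\e^{(\gamma-\mu)v}$, and not normalized to $1$ by~\refs{wc.17.6a}), and there is a nontrivial contribution from $v<0$, i.e.\ from $H$ evaluated at arguments \emph{larger} than $t$. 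So this identity is forward-looking and of Fredholm rather than Volterra type; the convolution-over-$[0,t]$ structure required by the key renewal theorem is simply not there. (One can indeed go from the conditioning renewal equation to the DDE by integration by parts, but inverting that step requires more than integrating the DDE — one effectively has to re-derive the renewal equation probabilistically or by Laplace transforms.) Your instinct that this step is ``the main obstacle'' is therefore correct but understates it: as written, the step fails. You should promote the conditioning-on-$T_1$ argument from a fallback to the main line of the proof, and fix the bookkeeping there — in your sketch the term $G\bigl((t-s)\vee 0\bigr)$ for $s>t$ should be $1$ (since $T_1=s<\ell$ and $s>t$ already forces $D>t$), not $G(0)=1-\e^{-\mu\ell}$. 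With that correction, your argument coincides with the paper's proof.
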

\begin{proof}
Conditioning on the first epoch we write
\[\PP(D>t)\ =\int_0^{t\wedge \ell} \PP(D>t-s)\,\mu\e^{-\mu s}\,\dd s+\indi{t<\ell}\PP(T_1\in(t,\ell)).\]
Putting $Z(t)=\PP(D>t)\e^{\gamma  t}$ and $z(t)=\indi{t<\ell}\PP(T_1\in(t,\ell))\e^{\gamma t}$ we obtain the renewal equation
\begin{equation}\label{11.9a}Z(t)=z(t)+\int_0^tZ(t-s)G(\dd s),\end{equation}
where $G$ is the measure with density $\mu\e^{(\gamma-\mu) s}$ for $s\le \ell$. Observe  that $G$ is nonlattice and proper according to~\eqref{wc.17.6a}, and so the key renewal theorem, see~\cite[Thm.\ V.4.3]{APQ}, shows that
$Z(t)\rightarrow\int_0^\infty z(s)\dd s/\int_0^\infty s G(\dd s)=c\in(0,\infty)$. This completes the proof.
\end{proof}

\begin{Rem}\label{rem:gamma}
Solving~\eqref{wc.17.6a} corresponds to finding a unique positive $\gamma\neq \mu$ such that $\gamma \e^{-\gamma \ell}=\mu \e^{-\mu\ell}$, unless $\mu=1/\ell$ in which case  $\gamma=\mu$. In particular, $\gamma$ is a continuous decreasing function of~$\mu$.
\end{Rem}

\begin{Cor}\label{cor:constlim}
If $\mu(t)\rightarrow\mu\in(0,\infty)$ then $\PP(D>t)\logsim \e^{-\gamma t}$, where $\gamma>0$ is the root of~\eqref{wc.17.6a}. Moreover, $-\log\PP(D>t)/t$ converges to 0 or $\infty$ according to $\mu(t)\rightarrow \infty$ and $\mu(t)\rightarrow 0$.
\end{Cor}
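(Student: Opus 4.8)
The plan is to deduce the result from Proposition~\ref{prop:homogen} (the exact exponential asymptotics in the truly homogeneous case) together with the monotonicity/coupling machinery already available, in particular Proposition~\ref{Prop:20.9a} and the comparison ideas behind Proposition~\ref{prop:partlyCoincide}. First I would fix $\varepsilon>0$ and, using $\mu(t)\to\mu\in(0,\infty)$, pick $T$ so large that $\mu-\varepsilon\le\mu(t)\le\mu+\varepsilon$ for all $t>T$. Sandwich $\mu$ between two modified rate functions $\mu_\pm$ that agree with $\mu(\cdot)$ on $[0,T]$ and equal the constants $\mu\pm\varepsilon$ on $(T,\infty)$; then $\mu_-\le\mu\le\mu_+$ pointwise (after possibly shrinking $T$), so the coupling in Proposition~\ref{Prop:20.9a} gives the stochastic ordering $D_+\le D\le D_-$ and hence $\PP(D_+>t)\le\PP(D>t)\le\PP(D_->t)$.

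Next I would compute the logarithmic rate for the eventually-constant processes $\mu_\pm$. By Proposition~\ref{prop:partlyCoincide}, altering a rate function only on a bounded initial segment $[0,T]$ changes $\PP(D>t)$ by at most bounded multiplicative constants, so $\PP(D_\pm>t)\logsim\PP(\widetilde D_\pm>t)$ where $\widetilde D_\pm$ corresponds to the genuinely homogeneous rates $\mu\pm\varepsilon$. Proposition~\ref{prop:homogen} then yields $\PP(\widetilde D_\pm>t)\sim c_\pm\e^{-\gamma(\mu\pm\varepsilon)t}$, where $\gamma(\cdot)$ is the root of~\eqref{wc.17.6a} viewed as a function of the rate. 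Taking $-\log(\cdot)/t$ and letting $t\to\infty$ gives
\[
\gamma(\mu+\varepsilon)\ \le\ \liminf_{t\to\infty}\frac{-\log\PP(D>t)}{t}\ \le\ \limsup_{t\to\infty}\frac{-\log\PP(D>t)}{t}\ \le\ \gamma(\mu-\varepsilon).
\]
Finally, by Remark~\ref{rem:gamma}, $\gamma$ is continuous in the rate, so letting $\varepsilon\downarrow 0$ squeezes both bounds to $\gamma(\mu)=\gamma$, which is exactly $\PP(D>t)\logsim\e^{-\gamma t}$.

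For the last sentence of the corollary, the same squeezing argument applies. If $\mu(t)\to\infty$, then for every $R<\infty$ eventually $\mu(t)\ge R$, so comparison with the homogeneous rate $R$ gives $\limsup_t -\log\PP(D>t)/t\le\gamma(R)$; since $\gamma(R)\to 0$ as $R\to\infty$ (by Remark~\ref{rem:gamma}, $\gamma$ is decreasing and $\gamma(R)\e^{-\gamma(R)\ell}=R\e^{-R\ell}\to 0$ forces $\gamma(R)\to 0$), we get $-\log\PP(D>t)/t\to 0$. Symmetrically, if $\mu(t)\to 0$, comparison with a small constant rate $r$ gives $\liminf_t -\log\PP(D>t)/t\ge\gamma(r)\to\infty$ as $r\downarrow 0$, so the ratio tends to $\infty$. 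The main obstacle is purely bookkeeping: one must be careful that the modified rate functions $\mu_\pm$ still satisfy the standing positivity assumption of Section~\ref{S:Intr} (they do, since $\mu\pm\varepsilon>0$ for small $\varepsilon$ and $r>0$), and that Proposition~\ref{prop:partlyCoincide} is being applied in the direction that compares an eventually-constant rate to a genuinely constant one — which is legitimate since both have the same tail behaviour past $T$. No genuinely hard estimate is needed beyond what is already proved.
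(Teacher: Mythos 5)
Your proof is essentially the paper's own argument: sandwich $\mu(\cdot)$ by rate functions that are eventually constant at $\mu\pm\varepsilon$, invoke the coupling of Proposition~\ref{Prop:20.9a} to transfer tail bounds, use Proposition~\ref{prop:partlyCoincide} to pass to the genuinely homogeneous case, apply Proposition~\ref{prop:homogen}, and finally squeeze using the continuity of $\gamma(\cdot)$ from Remark~\ref{rem:gamma}; the treatment of $\mu(t)\to 0$ and $\mu(t)\to\infty$ by one-sided comparison with a small (resp.\ large) constant rate is likewise the paper's argument.

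One slip worth flagging: the coupling $\mu_-\le\mu\le\mu_+$ gives $D_-\le D\le D_+$ (fewer points means a gap appears \emph{sooner}, as in the paper's proof of Proposition~\ref{Prop:20.9a}), hence $\PP(D_->t)\le\PP(D>t)\le\PP(D_+>t)$ --- both of your intermediate orderings are stated in the reversed direction. If one followed your intermediate inequalities literally, one would obtain $\gamma(\mu-\varepsilon)\le\liminf\le\limsup\le\gamma(\mu+\varepsilon)$, which is impossible since $\gamma$ is decreasing. Your displayed inequality chain and the rest of the argument are correct and correspond to the right orientation of the coupling, so this is a transcription error rather than a conceptual gap, but it should be fixed.
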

\begin{proof}
Given a small $\epsilon>0$, choose $T$ such that $\mu(t)\in(\mu-\epsilon,\mu+\epsilon)$ for all $t>T$. We may write
$\PP(D>t)\leq \PP(D'>t)$,
where $D'$ corresponds to $\mu(t)$ being fixed at $\mu+\epsilon$ for all $t>T$. According to Proposition~\ref{prop:partlyCoincide} and Proposition~\ref{prop:homogen} we have $\PP(D'>t)\logsim \e^{-\gamma' t}$, where $\gamma'$ solves~\eqref{wc.17.6a} with $\mu$ replaced by $\mu+\epsilon$. Thus
\[\liminf_{t\rightarrow \infty}\frac{\log \PP(D>t)}{-\gamma t}\geq \frac{\gamma'}{\gamma}\]
for any $\epsilon>0$. But $\gamma'\uparrow\gamma$ as $\epsilon\downarrow 0$  according to Remark~\ref{rem:gamma}, and hence the lower bound of $\liminf$ is~1. Similar reasoning shows that 1 is also the upper bound for the $\limsup$ which completes the first part of the proof.

If $\mu(t)\rightarrow 0$, we may choose some arbitrarily small $\mu>0$ and some $T$ such that $\mu(t)<\mu$ for $t>T$. Similarly as above we have
\[\liminf_{t\rightarrow\infty}-\frac{1}{t}\log\PP(D>t)\geq \gamma',\]
where $\gamma'$ solves~\eqref{wc.17.6a}, but then $\gamma'\rightarrow\infty$ as $\mu\downarrow 0$. The case of $\mu(t)\rightarrow\infty$ is similar.
\end{proof}

\subsection{Short tail}\label{sec:short}
Throughout this section we assume that $\mu(t)\rightarrow 0$ and so $\PP(D>t+u)/\PP(D>t)\rightarrow 0$ according to Proposition~\ref{prop:long_short}(ii); we call such $D$ \emph{short-tailed}. Corollary~\ref{cor:constlim} shows that if $\PP(D>t)\logsim \e^{-f(t)}$ then it must be that $t/f(t)\rightarrow 0$ as $t\rightarrow \infty$. 
This property of $f(t)$ turns out to be crucial in tightening the gap between the bounds for $\log\PP(D>t)$ proposed below. 


\begin{Th}\label{thm:short}
Assume that $\mu(t)$ is eventually non-increasing and let $f$ be a function which is
regularly varying at $\infty$ with index $\alpha\geq 1$ and satisfies $t/f(t)\rightarrow 0$. If 
\begin{equation}\label{eq:cor_short}\sum_{i=1}^n\log(\mu(ic))\ \sim\  -c^\rho f(n) \text{ as }n\rightarrow\infty\end{equation}
for all $c>0$ and some $\rho\in\RR$, then $\PP(D>t)\ \logsim\  \e^{-\ell^{\rho-\alpha} f(t)}$ as $t \rightarrow\infty$.
\end{Th}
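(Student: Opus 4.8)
The plan is to sandwich $\PP(D>t)$ between two quantities whose logarithmic asymptotics we can read off, using the fact that in the short-tailed regime a gap of length $\ell$ occurs "early". For the \emph{lower bound}, one natural route is: the event that there is no point of $\N$ in $(0,\ell)$ forces $D=0$, but more usefully, for $D>t$ it suffices that the very first gap of length $\ell$ appears somewhere in a window whose right endpoint is of order $t$. Concretely, partition $(0,t)$ into $\approx t/\ell$ disjoint slots of length $\ell$ and note that if every one of the first $\lceil t/\ell\rceil$ slots contains a point, then no such slot is gap-free, so automatically $D>t - O(\ell)$ after one checks the bookkeeping with \eqref{eq:indicator}; the probability of a single slot $((k-1)\ell,k\ell)$ containing a point is $1-\e^{-M((k-1)\ell,k\ell)}$, and since $\mu(t)\to 0$ this is $\asymp M((k-1)\ell,k\ell)\asymp \ell\mu(k\ell)$ up to constants. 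Taking logarithms and invoking \eqref{eq:cor_short} with $c=\ell$ gives $\sum_{k}\log\bigl(\ell\mu(k\ell)\bigr)\sim \sum_k\log\mu(k\ell)\sim -\ell^{\rho}f(t/\ell)$, and regular variation of index $\alpha$ turns $f(t/\ell)$ into $\ell^{-\alpha}f(t)(1+o(1))$, yielding the exponent $-\ell^{\rho-\alpha}f(t)$.

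For the \emph{upper bound}, use independence across a coarser grid: if $D>t$ then in particular none of a collection of $\approx t/c$ disjoint subintervals of length $\ell$ (now spaced $c$ apart, for $c>\ell$) can be gap-free, and these events \emph{are} independent because the intervals are disjoint and the relevant indicator depends only on points in that interval. Hence
\[
\PP(D>t)\ \le\ \prod_{k} \bigl(1-\e^{-M(kc,kc+\ell)}\bigr),
\]
and since $\mu\to 0$, $1-\e^{-M(kc,kc+\ell)}\le M(kc,kc+\ell)\le \ell\sup_{[kc,kc+\ell]}\mu \le \ell\,\mu(kc)$ for $k$ large by eventual monotonicity; taking logs and using \eqref{eq:cor_short} with this $c$ gives $\log\PP(D>t)\le -c^\rho f(t/c)(1+o(1)) = -c^{\rho-\alpha}f(t)(1+o(1))$. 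Letting $c\downarrow\ell$ matches the lower bound. One must also use $t/f(t)\to 0$ exactly here: it guarantees that the polynomial-in-$c$ prefactors $c^{\pm\alpha}$ and the error terms coming from the finitely many initial slots, the replacement of $M$ by $\ell\mu$, and the $\Theta(1)$ boundary corrections in "$\approx t/c$ slots", are all $o(f(t))$ and hence invisible after dividing by $\log$; without it the additive $O(t)$ errors could compete with $f(t)$.

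The step I expect to be the main obstacle is making the lower bound genuinely tight rather than merely of the right form. The crude "every slot contains a point" event has probability that is a product of $1-\e^{-M}$ factors, which matches; the delicacy is (a) controlling the constants hidden in $1-\e^{-M}\asymp M\asymp \ell\mu$ uniformly and checking they contribute only $o(f(t))$ after taking logs — here $\sum_k \log(\text{const})=O(t/\ell)=o(f(t))$ by $t/f(t)\to 0$ — and (b) interchanging the roles of $f(t/c)$ and $c^{-\alpha}f(t)$, which requires the uniform convergence theorem for regularly varying functions, applied with $c$ ranging over a bounded set as we eventually send $c\downarrow\ell$. A secondary technical point is justifying that \eqref{eq:cor_short} may be used with argument $n=\lfloor t/c\rfloor$ (a real-to-integer passage), again absorbed by $t/f(t)\to 0$ together with the eventual monotonicity of $\mu$, which lets one bound $f$ between values at consecutive integers. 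Once these uniformity issues are handled, the two bounds pinch and give $\PP(D>t)\logsim \e^{-\ell^{\rho-\alpha}f(t)}$.
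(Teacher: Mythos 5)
Your upper bound is essentially the paper's: bounding $\Prob(D>t)$ by the product over disjoint length-$\ell$ intervals of the probability that the interval contains a point, then passing to $1-\e^{-M}\le M\le\ell\mu$ and invoking \eqref{eq:cor_short}, gives $\log\Prob(D>t)\lesssim -\ell^{\rho-\alpha}f(t)$ once regular variation and $t/f(t)\to 0$ dispose of the $O(t)$ error terms. (You can take $c=\ell$ directly; the $c\downarrow\ell$ detour is harmless but unnecessary.)

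The lower bound, however, has a genuine flaw, and it is not the one you flagged. The event ``every slot $((k-1)\ell,k\ell)$, $k\le\lceil t/\ell\rceil$, contains a point of $\N$'' does \emph{not} imply $D>t-O(\ell)$: a gap of length $\ell$ can straddle two consecutive slots. Concretely, if each slot contains exactly one point at its right end, $T_k=k\ell-\delta$, then $T_{k+1}-T_k=\ell$ while every slot has a point, so $D=T_1<\ell$. The obvious repair --- shrink the slots to length $\le\ell/2$ so that any interval of length $\ell$ contains a full slot --- does make the implication valid, but then the product has $\approx 2t/\ell$ factors and, after applying \eqref{eq:cor_short} with $c=\ell/2$ and regular variation, the exponent comes out as $-2^{\alpha-\rho}\ell^{\rho-\alpha}f(t)$, which does not match the upper bound. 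The paper explicitly warns against this trap in the remark following Lemma~\ref{lem:short} (``it is essential to keep the number of terms in the product close to $t$''). The mechanism that makes the bound both valid \emph{and} tight is to decouple slot length from slot spacing: in Lemma~\ref{lem:short} one demands a point in each short interval $\bigl[i(1-\epsilon)\ell,\,i(1-\epsilon)\ell+\epsilon\ell\bigr)$, $i\le\lceil t/((1-\epsilon)\ell)\rceil$. Any interval of length $\ell$ then contains one of these short intervals entirely, so the event genuinely forces $D>t$; there are only $\approx t/((1-\epsilon)\ell)$ factors, the $\sum_i\log\epsilon=O(t)$ correction is annihilated by $t/f(t)\to 0$, and the exponent is $-(1-\epsilon)^{\rho-\alpha}\ell^{\rho-\alpha}f(t)$, which closes to the upper bound as $\epsilon\downarrow 0$. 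Without this decoupling your lower bound cannot be simultaneously correct and tight, so the sandwich as you wrote it does not close.
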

\noindent Note that the assumption  $t/f(t)\rightarrow 0$ is automatic if $\alpha>1$.
 The proof of this result relies on the following Lemma.
\begin{Lemma}\label{lem:short}
Assume that $\ell=1$, $M(t,t+1)$ is bounded for large $t$ and that there exist positive functions $f,g$ such that $t/f(t)\rightarrow 0$ as $t\rightarrow\infty$ and $g(\epsilon)\rightarrow 1$ as $\epsilon\downarrow 0$. If for any small enough $\epsilon>0$ and some $k$
\begin{align}\label{eq:cond1}
&\limsup_{t\rightarrow\infty}\frac{1}{f(t)}\sum_{i=k}^{\lfloor t\rfloor}\log M(i,i+1)\ \leq\ -1,\\
\label{eq:cond2}
&\liminf_{t\rightarrow\infty}\frac{1}{f(t)}\sum_{i=k}^{\lceil t/(1-\epsilon)\rceil}
\log M\bigl(i(1-\epsilon),i(1-\epsilon)+\epsilon\bigr)\ \geq\ -g(\epsilon)
\end{align}
then $\PP(D>t)\logsim \e^{-f(t)}$ as $t \rightarrow\infty$.
\end{Lemma}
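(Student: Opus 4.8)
The plan is to sandwich $\PP(D>t)$ between two quantities whose logarithms are both asymptotically $-f(t)$, working throughout with the identity $\PP(D>t)=\PP(L(t+1)<1)$ coming from \eqref{eq:indicator} (recall $\ell=1$). The upper bound will use \eqref{eq:cond1} together with the fact that $D>t$ forces $\N$ to meet many disjoint unit intervals; the lower bound will use \eqref{eq:cond2} together with an explicit ``favourable configuration'' event.

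\emph{Upper bound.} If $D>t$ then $\N$ must intersect every unit interval $(i,i+1)$ with $k\le i\le\lfloor t\rfloor$; otherwise the last point $T_n\le i$ and the next point $T_{n+1}\ge i+1\le t+1$ give $T_{n+1}\wedge(t+1)-T_n\ge 1$, i.e.\ $L(t+1)\ge 1$. These intervals are disjoint, so by the independence of $\N$ on disjoint sets,
\[\PP(D>t)\ \le\ \prod_{i=k}^{\lfloor t\rfloor}\bigl(1-\e^{-M(i,i+1)}\bigr)\ \le\ \prod_{i=k}^{\lfloor t\rfloor}M(i,i+1),\]
using $1-\e^{-x}\le x$. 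Taking logarithms and invoking \eqref{eq:cond1} gives $\limsup_t\log\PP(D>t)/f(t)\le -1$, that is $\liminf_t\bigl(-\log\PP(D>t)\bigr)/f(t)\ge 1$. (Here $f(t)\to\infty$ because $t/f(t)\to 0$.)

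\emph{Lower bound.} Fix a small $\epsilon>0$, put $I_i=(i(1-\epsilon),i(1-\epsilon)+\epsilon)$ and $m=m(t)=\lceil(t+1)/(1-\epsilon)\rceil$. The $I_1,\dots,I_m$ are pairwise disjoint, and the geometric point is that consecutive ones are within distance strictly less than $1$: if $p_i\in I_i$ and $p_{i+1}\in I_{i+1}$ then $p_{i+1}-p_i<(1-\epsilon)+\epsilon=1$, while $p_1<1$. Hence on the event $\bigcap_{i=1}^{m}\{\N\cap I_i\ne\emptyset\}$ the increasing sequence $0<p_1<\cdots<p_m$ (any choice of $p_i\in\N\cap I_i$) has consecutive gaps below $1$ and $p_m>m(1-\epsilon)\ge t+1$, so for any point $T_n<t+1$ the next point of $\N$ after $T_n$ is at distance $<1$; thus $L(t+1)<1$, i.e.\ $D>t$. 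By independence on disjoint intervals,
\[\PP(D>t)\ \ge\ \prod_{i=1}^{m(t)}\bigl(1-\e^{-M(I_i)}\bigr).\]
Since $M(t,t+1)$ is bounded for large $t$ and $M(0,T)<\infty$ for each $T$, we have $B:=\sup_i M(I_i)<\infty$, whence $\log(1-\e^{-M(I_i)})\ge\log M(I_i)-B$ (from $1-\e^{-x}\ge x\e^{-x}$). Summing, discarding the constant tail $\sum_{i=1}^{k-1}$, applying \eqref{eq:cond2} to the range $i\ge k$, and noting that $m(t)=\Oh(t)=\oh(f(t))$, we get $\liminf_t\log\PP(D>t)/f(t)\ge -g(\epsilon)$, i.e.\ $\limsup_t\bigl(-\log\PP(D>t)\bigr)/f(t)\le g(\epsilon)$. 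Letting $\epsilon\downarrow 0$ and using $g(\epsilon)\to 1$ matches the lower bound to the upper bound, and $\PP(D>t)\logsim\e^{-f(t)}$ follows.

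\emph{Main obstacle.} The upper bound is routine. The work is in the lower bound: designing the sparse $\epsilon$-grid $\{I_i\}$, verifying that hitting all of them genuinely forces $L(t+1)<1$ (the elementary but fiddly gap computation), and checking that the two mismatches with \eqref{eq:cond2} — starting the construction at $i=1$ rather than $i=k$, and replacing $1-\e^{-M(I_i)}$ by $M(I_i)$ at a per-term cost $\Oh(1)$ over $\Oh(t)$ terms — are absorbed into $\oh(f(t))$, which is exactly what the hypotheses $t/f(t)\to 0$ and boundedness of $M(t,t+1)$ deliver.
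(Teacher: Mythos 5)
Your proof follows the same strategy as the paper's: an upper bound from independence of $\N$ on the unit intervals $(i,i+1)$ together with $1-\e^{-x}\le x$, and a lower bound from the favourable event that $\N$ hits each of the short $\epsilon$-intervals on the $(1-\epsilon)$-grid, together with $1-\e^{-x}\ge x\e^{-x}$ and the boundedness of $M(t,t+1)$. The geometry of the lower-bound construction is checked correctly.

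There is, however, one place where your bookkeeping does not match the hypotheses. You take $m(t)=\lceil(t+1)/(1-\epsilon)\rceil$ grid intervals, whereas \eqref{eq:cond2} controls the sum only up to index $\lceil t/(1-\epsilon)\rceil$. The extra one or two terms $\log M\bigl(i(1-\epsilon),i(1-\epsilon)+\epsilon\bigr)$ for $i$ between these limits are \emph{not} $\Oh(1)$: since $M(I_i)\to 0$ they are unbounded below, so they cannot be absorbed into the $\Oh(t)=\oh(f(t))$ error along with the $-B$ per factor. Rewriting the overshoot as $\sum_{i=k}^{\lceil t'/(1-\epsilon)\rceil}$ with $t'\approx t+1$ and invoking \eqref{eq:cond2} at $t'$ would require $\limsup_t f(t+1)/f(t)\le 1$, which is not among the lemma's hypotheses (it holds for the regularly varying $f$ of Theorem~\ref{thm:short}, but the lemma is stated and used without it). The clean fix is exactly the paper's choice $n(t)=\lceil t/(1-\epsilon)\rceil$, so that the sum matches \eqref{eq:cond2} on the nose; one then has $p_{n(t)}\ge t$ rather than $\ge t+1$, and the remaining case $T_m\ge p_{n(t)}$ is handled by noting that a.s.\ no point of $\N$ sits exactly at the finitely many grid points $i(1-\epsilon)$, so $T_m>t$ and $(t+1)-T_m<1$ automatically. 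With that adjustment your argument is complete and is, in substance, the paper's proof.
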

\begin{proof}
Notice that $\{D>t\}$ is contained in the event that $\N$ has a point in each of the intervals $[i,i+1),i=k,\ldots,\lfloor t\rfloor$, and so
\[\Prob(D>t)\ \leq\ \prod_{i=k}^{\lfloor t\rfloor}\bigl(1-\exp\{-M(i,i+1)\}\bigr)\ \leq\ 
\prod_{i=k}^{\lfloor t\rfloor}M(i,i+1).\]
by the independence property of $\N$  and \eqref{23.9a}.
Taking logarithms we get
\[\log\PP(D>t)\ \leq\ \sum_{i=k}^{\lfloor t\rfloor}\log M(i,i+1).\]

Fix $\epsilon\in (0,1/2)$ and let $h=1-\epsilon$. Consider the intervals $[hi,hi+\epsilon)$ and assume that there is a point of $\N$ in each of these intervals for $i=0,\ldots,n$ with $n=\lceil t/h\rceil$. Then necessarily $D>t$, because any interval $[s,s+1)$ for $s\in[0,t]$ must contain one of the intervals $[hi,hi+\epsilon)$ and hence a point (see also Figure~\ref{fig:bound}).
\begin{figure}[h]
  \centering
  \includegraphics{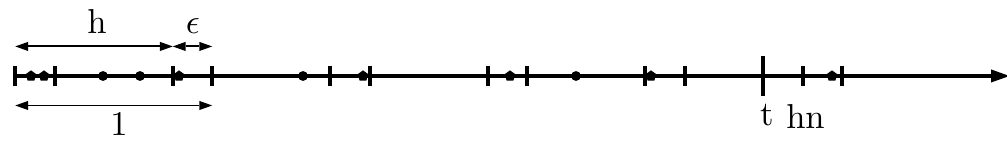}
  \caption{Points of $\N$ and the intervals $[hi,hi+\epsilon)$.}
  \label{fig:bound}
\end{figure}
Thus we have a bound
\[\Prob(D>t)\ \geq\ \prod_{i=0}^{\lceil t/h\rceil}\bigl(1-\exp\{-M(hi,hi+\epsilon)\}\bigr)\ \geq\  
c_1\prod_{i=k}^{\lceil t/h\rceil}c_2M(hi,hi+\epsilon)\]
for some $c_1,c_2>0$, because $M(t,t+1)$ is eventually bounded. Hence we obtain
\[\log\PP(D>t)\ \geq\ \Oh(t)+\sum_{i=k}^{\lceil t/h\rceil}\log M(hi,hi+\epsilon).\]

Finally, from~\eqref{eq:cond1} and~\eqref{eq:cond2} we get
\[-g(\epsilon)\ \leq\ \liminf \log\PP(D>t)/f(t)\leq\limsup \log \PP(D>t)/f(t)\leq -1,\]
because $t/f(t)=\oh(1)$ according to the assumptions. 
Choosing $\epsilon$ arbitrarily small we get $\log\PP(D>t)/f(t)\to-1$, which completes the proof.
\end{proof}


\begin{proof}[Proof of Theorem~\ref{thm:short}]
First we fix $\ell=1$ and show that $\PP(D>t)\logsim \e^{-f(t)}$.
Regular variation implies that $f(t+h)/f(t)\sim 1$ uniformly in $h\in[-1,1]$ (use e.g.\ the representation theorem~\cite[Thm.\ 1.3.1]{bingham1989regular}).
Next, we note that $M(i,i+1)\leq \mu(i)$ and $M\bigl(i(1-\epsilon),i(1-\epsilon)+\epsilon\bigr)\geq 
\epsilon\mu\bigl((i+1)(1-\epsilon)\bigr)$ for $\epsilon<1/2$ and $i\geq i_0$.
So we have
\[\frac{1}{f(t)}\sum_{i=i_0}^{\lfloor t\rfloor}\log M(i,i+1)\ \leq\
 \frac{1}{-f(\lfloor t\rfloor)}\sum_{i=i_0}^{\lfloor t\rfloor}\log(\mu(i))\frac{-f(\lfloor t\rfloor)}{f(t)}\ \sim\ -1.\]
Similarly,
\begin{align*}
&\frac{1}{f(t)}\sum_{i=i_0}^{\lceil t/(1-\epsilon)\rceil}\log M\bigl(i(1-\epsilon),i(1-\epsilon)+\epsilon\bigr)\ 
\geq\ \frac{1}{f(t)}\sum_{i=i_0+1}^{\lceil t/(1-\epsilon)\rceil+1}\bigl(\log \mu\bigl(i(1-\epsilon)\bigr)+\log \epsilon\bigr)\,.
\end{align*}
It only remains to note that
\[f\bigl({\lceil t/(1-\epsilon)\rceil}+1\bigr)/f(t)\ \sim\ f\bigl(t/(1-\epsilon)\bigr)/f(t)\ \sim\ (1-\epsilon)^{-\alpha},\]
and so we take $g(\epsilon)=(1-\epsilon)^{\rho-\alpha},$
which concludes the proof for $\ell=1$.

For arbitrary $\ell>0$,  we take $\mu'(t)=\ell\mu(t\ell)$ according to Remark~\ref{rem:l=1} and so
\[\sum_{i=1}^n\log\mu'(ic)\ =\ n\log \ell+ \sum_{i=1}^n\log\mu(ic\ell)\ \sim\ -c^\rho \ell^\rho f(n)\]
implying $\PP(D>t)=\PP(D'>t/\ell)\logsim \e^{-\ell^\rho f(t/\ell)}\logsim \e^{-\ell^{\rho-\alpha} f(t)}$.
\end{proof}
\begin{Rem} \rm
In the above proof the choice of the lower bound for $\PP(D>t)$ is not initially obvious: it requires a point near each natural number up to $\lceil t\rceil$, see Figure~\ref{fig:bound}, and so it may look too weak.
It could appear more natural  to require that each of the intervals $[i/2,i/2+1/2), i=0,\ldots, 2\lceil t\rceil$ has a point. In the short tail case it turns out that it is essential to keep the number of terms in the product close to $t$, which is achieved by the former bound.
\halmoss
 \end{Rem}

Specializing to some important forms of $\mu(t)$, we obtain:
\begin{Cor}\label{Cor:25.7c}
Let $a,b>0$. As $t \rightarrow\infty$ it holds that:\\ [1mm]
{\rm (i)} If $\mu(t)=a\log^{-b} t$ for large $t$ then $\PP(D>t)\logsim \e^{-bt\log\log t/\ell}$.\\
{\rm (ii)} If $\mu(t)=at^{-b}$ for large $t$ then $\PP(D>t)\logsim \e^{-bt\log t/\ell}$.\\
{\rm (iii)} If $\mu(t)=a\e^{-bt}$ for large $t$ then $\PP(D>t)\logsim \e^{-bt^2/(2\ell)}$.
\end{Cor}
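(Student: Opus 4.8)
The plan is to apply Theorem~\ref{thm:short} in each of the three cases by verifying its hypotheses, namely that $\mu(t)$ is eventually non-increasing and that one can find a function $f$, regularly varying at $\infty$ with some index $\alpha\geq 1$ and with $t/f(t)\to 0$, such that $\sum_{i=1}^n\log\mu(ic)\sim -c^\rho f(n)$ for all $c>0$ and some $\rho\in\RR$. Since each proposed $\mu(t)$ is eventually non-increasing, the only real work is to compute the partial sums $\sum_{i=1}^n\log\mu(ic)$ asymptotically, identify $f$ and the exponents $\alpha,\rho$, and then read off the conclusion $\PP(D>t)\logsim\e^{-\ell^{\rho-\alpha}f(t)}$.

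In case (i), $\log\mu(ic)=\log a-b\log\log(ic)$, so $\sum_{i=1}^n\log\mu(ic)\sim -bn\log\log n$ since $\log\log(ic)\sim\log\log n$ on average (the contribution of $\log a$ and of the small $i$ is negligible, and $\sum_{i=1}^n\log\log(ic)\sim n\log\log n$ by a standard estimate comparing the sum to $\int_1^n\log\log(xc)\,\dd x$). Hence $f(n)=n\log\log n$, which is regularly varying with index $\alpha=1$ and satisfies $t/f(t)\to 0$, and $\rho=0$; thus $\ell^{\rho-\alpha}=\ell^{-1}$ and $\PP(D>t)\logsim\e^{-bt\log\log t/\ell}$. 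In case (ii), $\log\mu(ic)=\log a-b\log(ic)=\log a-b\log c-b\log i$, so $\sum_{i=1}^n\log\mu(ic)\sim -b\sum_{i=1}^n\log i\sim -bn\log n$ by Stirling; the $\log c$ term contributes only $-bn\log c=\oh(n\log n)$, so again $\rho=0$, $f(n)=n\log n$ (regularly varying, index $\alpha=1$, $t/f(t)\to 0$), and $\PP(D>t)\logsim\e^{-bt\log t/\ell}$. In case (iii), $\log\mu(ic)=\log a-bci$, so $\sum_{i=1}^n\log\mu(ic)\sim -bc\,n^2/2=-c\cdot(bn^2/2)$; here $f(n)=bn^2/2$ is regularly varying with index $\alpha=2$ and $\rho=1$, so $\ell^{\rho-\alpha}=\ell^{-1}$ and $\PP(D>t)\logsim\e^{-\ell^{-1}bt^2/2}=\e^{-bt^2/(2\ell)}$.

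The main obstacle, such as it is, lies in case (iii): one must check that $f(n)=bn^2/2$ legitimately plays the role demanded by Theorem~\ref{thm:short}, i.e.\ that the scaling relation $\sum_{i=1}^n\log\mu(ic)\sim -c^\rho f(n)$ holds with $\rho=1$ and not, say, $\rho=2$ — the point being that $\sum_{i=1}^n\log\mu(ic)\sim -bc\sum_{i=1}^n i\sim -bc\,n^2/2$, so the $c$-dependence is linear while the $n$-dependence is quadratic, forcing $\rho=1$ and $\alpha=2$ separately rather than $\rho=\alpha$. Once this bookkeeping is done correctly the exponent $\ell^{\rho-\alpha}=\ell^{-1}$ comes out right. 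In cases (i) and (ii) the only minor care needed is the elementary asymptotic estimates $\sum_{i=1}^n\log\log(ic)\sim n\log\log n$ and $\sum_{i=1}^n\log i\sim n\log n$, together with the observation that additive constants ($\log a$) and $c$-dependent additive terms contribute $\Oh(n)=\oh(f(n))$ and hence do not affect either $f$ or $\rho$; this is why $\rho=0$ in both cases and the $a$-dependence disappears from the logarithmic asymptotics, consistent with the Remark preceding Section~\ref{sec:homogen}.
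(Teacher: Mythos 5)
Your proof is correct and follows essentially the same route as the paper's: verify the hypotheses of Theorem~\ref{thm:short}, compute $\sum_{i=1}^n\log\mu(ic)$, identify $f$, $\alpha$, $\rho$, and read off $\e^{-\ell^{\rho-\alpha}f(t)}$, with the same choices of $\rho$ and $\alpha$ in each case. One small bookkeeping point: in cases (i) and (ii) you declare $f(n)=n\log\log n$ and $f(n)=n\log n$, but since $\rho=0$ the relation $\sum_{i=1}^n\log\mu(ic)\sim -c^\rho f(n)$ forces $f(n)=bn\log\log n$ and $f(n)=bn\log n$ respectively (the factor $b$ must live inside $f$), which is what you implicitly use when you write the final answer with $b$ in the exponent — the paper makes this explicit.
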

\begin{proof} (i) Note that $\sum_{i=1}^n\log \log(ic)\sim n\log\log n$,
where one can use e.g.\ the discrete L'Hospital's rule.
Hence
\[\sum_{i=1}^n\log\mu(ic) \ =\ n\log a-b\sum_{i=1}^n\log \log(ic)\sim - bn\log\log n\ =\ -f(n).\]
Hence $\rho=0,\alpha=1$ and $\PP(D>t)\logsim \e^{-bt\log\log t/\ell}$.

(ii) Observe that
\[\sum_{i=1}^n\log\mu(ic)\ =\ n(\log a-b\log c)-b\sum_{i=1}^n\log i\ \sim\ - bn\log n\ =\ -f(n).\] Hence $\rho=0,\alpha=1$ and $\PP(D>t)\logsim \e^{-bt\log t/\ell}$.


(iii) Here
\[\sum_{i=1}^n\log\mu(ic)\ =\ n\log a-bc\sum_{i=0}^n i\sim -bc n^2/2\ =\ -cf(n).\]
Hence $\rho=1,\alpha=2$ and so $\PP(D>t)\logsim\e^{-bt^2/(2\ell)}.$
\end{proof}

\subsection{Long tail}\label{SS:Long_tail}
In this section we focus on the case of long tails, and hence we mainly think about examples where $\mu(t)\rightarrow\infty$
but so slowly that $D<\infty$ a.s..
Recall from Corollary~\ref{cor:finite} that this essentially only allows $\mu(t)$
to grow at rate $\log t$. This makes it reasonable to assume that $M(t,t+\ell)$ is approximately $\ell\mu(t)$. It turns out that in the long-tailed case the delay differential equation~\eqref{eq:diff_eq} readily provides the asymptotics.
\begin{Prop}\label{prop:long}
Assume that $\mu(t)$ is continuous almost everywhere, tending to $\infty$, and  satisfies
$M(t,t+\ell)=\ell\mu(t)+\oh(1)$ as $t\rightarrow\infty$ together with
the conditions of Theorem~{\normalfont \ref{prop_int_test}} for $\Prob(D<\infty)=1$.
Let $c$ be arbitrary and define $f(t)=\int_c^t
\e^{-\ell\mu(s)}\mu(s)\,\dd s$. Then $\Prob(D>t)\logsim\e^{-f(t)}$.
\end{Prop}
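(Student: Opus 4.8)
The plan is to read the asymptotics off the delay differential equation \eqref{eq:diff_eq} directly, using a bootstrap argument: first establish a rough upper bound on $\Prob(D>t)$, then feed it back into \eqref{eq:diff_eq} to sharpen, and finally match with a lower bound. Writing $F(t)=\Prob(D>t)$, equation \eqref{eq:diff_eq} reads $F'(t)=-\e^{-M(t,t+\ell)}\mu(t)F(t-\ell)$, and under the hypothesis $M(t,t+\ell)=\ell\mu(t)+\oh(1)$ the coefficient $\e^{-M(t,t+\ell)}\mu(t)$ is asymptotically $\e^{-\ell\mu(t)}\mu(t)=f'(t)$ (up to a factor $1+\oh(1)$). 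Since $D<\infty$ a.s.\ and $D$ is long-tailed by Proposition~\ref{prop:long_short}(i), one expects $F(t-\ell)/F(t)\to 1$, so that $F'(t)/F(t)\approx -f'(t)$, giving $-\log F(t)\sim f(t)$. The work is making these heuristics rigorous while only controlling logarithmic asymptotics, which gives some slack.

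First I would set up the upper bound. Rewriting \eqref{eq:diff_eq} as $(\log F)'(t) = -\e^{-M(t,t+\ell)}\mu(t)\,F(t-\ell)/F(t)$ and using $F(t-\ell)\ge F(t)$ (monotonicity) gives $(\log F)'(t)\le -\e^{-M(t,t+\ell)}\mu(t)$. Integrating from $c$ to $t$ and using $M(t,t+\ell)=\ell\mu(t)+\oh(1)$, hence $\e^{-M(t,t+\ell)}\mu(t)\ge (1-\oh(1))\e^{-\ell\mu(t)}\mu(t)=(1-\oh(1))f'(t)$, yields $\log F(t)\le -f(t)+\Oh(1)+\oh(f(t))$, i.e.\ $\limsup \log F(t)/(-f(t))\ge 1$. (Note $f(t)\to\infty$: this is exactly the divergence $I=\infty$ guaranteed by the assumed conditions of Theorem~\ref{prop_int_test}, since $f'(t)=\e^{-\ell\mu(t)}\mu(t)$ and $M(t,t+\ell)-\ell\mu(t)$ is bounded.)

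For the lower bound, the key is to control the ratio $F(t-\ell)/F(t)$ from above, or rather to show $\log\bigl(F(t-\ell)/F(t)\bigr)=\oh(f(t))$. The natural route is: iterate $(\log F)'(s)\ge -\e^{-M(s,s+\ell)}\mu(s)\,F(s-\ell)/F(s)$ over one lag interval. A crude bound $F(s-\ell)/F(s)\le 1/F(s)$ is too lossy; instead I would proceed by induction on the intervals $[k\ell,(k+1)\ell)$ as suggested by the recursion structure noted after \eqref{eq:diff_eq}. Concretely, assume inductively that $-\log F(t)\le (1+\delta)f(t)+C$ on $[c,k\ell)$; then on $[k\ell,(k+1)\ell)$, $F(t-\ell)\le \e^{C}F((k-1)\ell\vee c)^{-\text{something}}$... rather, more cleanly: since $D$ is long-tailed, for every $\eta>0$ there is $T_\eta$ with $F(t-\ell)/F(t)\le 1+\eta$ for $t\ge T_\eta$; plugging this into $(\log F)'(t)\ge -(1+\eta)\e^{-M(t,t+\ell)}\mu(t)\ge -(1+\eta)(1+\oh(1))f'(t)$ and integrating gives $-\log F(t)\le (1+\eta)(1+\oh(1))f(t)+\Oh(1)$, hence $\liminf \log F(t)/(-f(t))\le (1+\eta)$ for all $\eta>0$, i.e.\ $\le 1$. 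Combining with the upper bound gives $\log F(t)/f(t)\to -1$, which is the claim.

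The main obstacle is rigorously justifying $F(t-\ell)/F(t)\to 1$ with enough uniformity. Proposition~\ref{prop:long_short}(i) gives precisely $\Prob(D>t+u)/\Prob(D>t)\to 1$ for fixed $u$, which applied with $u=\ell$ (and $t\mapsto t-\ell$) gives exactly $F(t-\ell)/F(t)\to 1$ — so the long-tail property from Proposition~\ref{prop:long_short} is the crucial input and the obstacle is really just arranging the estimates so that only this fixed-shift statement is needed, never a uniform-in-shift version. A secondary technical point is that $\mu$ is only continuous almost everywhere, so \eqref{eq:diff_eq} holds only a.e.; but $F$ is still locally absolutely continuous (it is an integral by Proposition~\ref{prop:calc}), so integrating the differential inequalities is legitimate. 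I would also invoke Proposition~\ref{prop:partlyCoincide} at the start to replace $\mu$ by a version that is bounded below away from $0$ on an initial segment and equals $\mu$ eventually, so that all the $\Oh(1)$ boundary terms are genuinely finite; this does not affect the logarithmic asymptotics.
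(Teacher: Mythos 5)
Your proof is correct and takes essentially the same approach as the paper: it reads the asymptotics directly off the delay differential equation \eqref{eq:difflog}, uses Proposition~\ref{prop:long_short}(i) to get $\PP(D>t-\ell)/\PP(D>t)\to 1$ so that $(-\log \Prob(D>t))'\sim \e^{-\ell\mu(t)}\mu(t)$, and integrates. (Two of your $\limsup$/$\liminf$ labels are swapped, but the inequalities themselves go the right way.)
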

\begin{proof}
Let $\xi(t)=-\log\PP(D>t)$.
According to~\eqref{eq:difflog} we may write
\[\xi'(t)=\e^{-M(t,t+\ell)}\frac{\PP(D>t-\ell)}{\PP(D>t)}\mu(t)\sim \e^{-\ell\mu(t)}\mu(t),\]
where we also employ Proposition~\ref{prop:long_short} giving that $\PP(D>t-\ell)/\PP(D>t)\rightarrow 1$. Hence for any $\epsilon>0$ there exists $T$ such that
\[(1-\epsilon)\e^{-\ell\mu(t)}\mu(t)\ \leq\ \xi'(t)\ \leq\ (1+\epsilon)\e^{-\ell\mu(t)}\mu(t), \quad t>T.\]
Thus by the fundamental theorem of calculus we have for $t>T$:
\[(1-\epsilon)\bigl(f(t)-f(T)\bigr)\ \leq\ \xi(t)-\xi(T)\ \leq \ (1+\epsilon)\bigl(f(t)-f(T)\bigr).\]
But $\xi(t)\to\infty$ since $\Prob(D<\infty)=1$, and so it must be that $f(t)\rightarrow\infty$.
This gives $\xi(t)\sim f(t)$.
\end{proof}
\begin{Cor} \label{Cor:25.7a}
The following logarithmic asymptotics hold:\\[1mm]
{\rm (i)} If $\mu(t)=b\log t-\log a$ for large $t$ with $b\in (0,1/\ell)$ and $a>0$ then
\[\PP(D>t)\logsim \exp\Bigl\{-\frac{ba^\ell}{1-b\ell}t^{1-b\ell}\log t\Bigr\}\,.\]
{\rm (ii)} If $\ell=1$ and $\mu(t)=\log t-b\log\log t-\log a$ for large $t$ with $a>0,b>-2$ then
\[\PP(D>t)\logsim \exp\Bigl\{-\frac{a}{2+b}\log^{2+b} t\Bigr\}\,.\]
{\rm (iii)} If $\ell=1$ and $\mu(t)=\log t+2\log\log t-\log a$ for large $t$ with $a>0$ then
\[\PP(D>t)\logsim \log^{-a} t\,.\]
\end{Cor}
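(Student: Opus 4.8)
The plan is to deduce all three parts from Proposition~\ref{prop:long}: the work reduces to verifying its hypotheses in each example and then evaluating the asymptotics of $f(t)=\int_c^t\e^{-\ell\mu(s)}\mu(s)\,\dd s$. By Proposition~\ref{prop:partlyCoincide} the logarithmic asymptotics depend only on the tail of $\mu$, so I may assume $\mu(t)$ equals the stated expression for all $t$ past a fixed point, where it is in particular continuous.

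For the hypotheses of Proposition~\ref{prop:long}: in each case $\mu(t)\to\infty$, and the relation $M(t,t+\ell)=\ell\mu(t)+\oh(1)$ follows from $\mu'(t)\to0$ (indeed $(b\log t)'=b/t\to0$, $(\log t-b\log\log t)'\to0$, etc.), which forces $\sup_{s\in[t,t+\ell]}|\mu(s)-\mu(t)|\to0$. The remaining condition $\Prob(D<\infty)=1$ is immediate in (i) from Corollary~\ref{cor:finite}(i), since $\limsup\mu(t)/\log t=b<1/\ell$. In (ii) and (iii) we sit in the critical regime $\mu(t)\sim\log t/\ell$ and must instead use the refinement in Remark~\ref{rem:finite}: for (ii), $\ell\mu(t)=\log t+(1+a')\log_2 t+\Oh(1)$ with $a'=-b-1$, and $a'<1$ because $b>-2$; for (iii), $\ell\mu(t)=\log t+2\log_2 t+\Oh(1)$, i.e.\ the $\log_3 t$-coefficient vanishes, so $a'=0\le1$. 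In both cases Remark~\ref{rem:finite} gives $D<\infty$ a.s.\ (equivalently, $f'(t)=\e^{-\ell\mu(t)}\mu(t)$ differs from the integrand of the integral $I$ of Theorem~\ref{prop_int_test} by a factor $\e^{M(t,t+\ell)-\ell\mu(t)}=1+\oh(1)$, so the divergence of $I$ is visible in the computation of $f$ below).

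For $f$, all integrals are elementary. In (i), $\e^{-\ell\mu(s)}=a^\ell s^{-b\ell}$, so $\e^{-\ell\mu(s)}\mu(s)\sim ba^\ell s^{-b\ell}\log s$; since $b\ell<1$, integration by parts gives $\int_c^t s^{-b\ell}\log s\,\dd s\sim t^{1-b\ell}\log t/(1-b\ell)$, whence $f(t)\sim\frac{ba^\ell}{1-b\ell}t^{1-b\ell}\log t$. In (ii) with $\ell=1$, $\e^{-\mu(s)}=a\log^b s/s$, so $\e^{-\mu(s)}\mu(s)\sim a\log^{b+1}s/s$ and $\int_c^t(\log^{b+1}s)/s\,\dd s\sim\log^{b+2}t/(b+2)$, where $b+2>0$ is exactly the standing assumption, giving $f(t)\sim\frac{a}{2+b}\log^{2+b}t$. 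In (iii) with $\ell=1$, $\e^{-\mu(s)}=a/(s\log^2 s)$, so $\e^{-\mu(s)}\mu(s)\sim a/(s\log s)$ and $\int_c^t\dd s/(s\log s)\sim\log\log t$, giving $f(t)\sim a\log\log t$ and hence $\Prob(D>t)\logsim\e^{-a\log\log t}=\log^{-a}t$. Since $f(t)\to\infty$ in each instance, Proposition~\ref{prop:long} applies and delivers the claimed logarithmic asymptotics.

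The only genuinely delicate step is confirming $\Prob(D<\infty)=1$ in the borderline cases (ii) and (iii), where Corollary~\ref{cor:finite} is too blunt and one must lean on the iterated-logarithm bookkeeping of Remark~\ref{rem:finite}; everything else is a routine computation with standard antiderivatives, the only mild care being to check that the lower-order corrections in $\mu$ (the constant shifts $-\log a$ and the $\log\log$ subtractions) contribute genuinely lower-order terms to $f$.
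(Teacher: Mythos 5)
Your proof is correct and follows essentially the same route as the paper: verify the hypotheses of Proposition~\ref{prop:long} (in particular $M(t,t+\ell)=\ell\mu(t)+\oh(1)$ and $\Prob(D<\infty)=1$), then compute the asymptotics of $f(t)=\int_c^t\e^{-\ell\mu(s)}\mu(s)\,\dd s$; all three integrals check out. The only cosmetic difference is that in the borderline cases (ii) and (iii) the paper derives $\Prob(D<\infty)=1$ directly from the computed divergence of $f$ (via the equivalence $f(t)\to\infty\iff I=\infty$, citing Remark~\ref{rem:finite} as a side reference), whereas you route the argument primarily through Remark~\ref{rem:finite}'s iterated-logarithm criterion and mention the $I$-divergence as an equivalent check; both justifications are sound and the computations coincide.
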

\begin{proof}
In (i), the conditions of Proposition~\ref{prop:long} concerning $\mu(t)$ are easily verified. Finally, $\e^{-\ell\mu(t)}\mu(t)=a^\ell t^{-b\ell}(b\log t-\log a)$ with primitive \[f(t)\ =\ \frac{ba^\ell}{1-b\ell}t^{1-b\ell}(\log t-1/(1-b\ell))-\frac{a^\ell\log a}{1-b\ell}t^{1-b\ell}\ \sim\ \frac{ba^\ell}{1-b\ell}t^{1-b\ell}\log t\,.\]
For (ii), we get
\begin{align*}
\int_c^t \e^{-\mu(s)}\mu(s)\,\dd s\ &\sim
\int_c^t \frac{a\log^b s}{s}\log s\,\dd s\ =\ \Bigl[\frac{a\log^{2+b} s}{2+b}\Bigr]_c^t
\end{align*}
The assumption $b>-2$ ensures that this expression has limit $\infty$ as $t\to\infty$,
and therefore both that $\Prob(D<\infty)=1$ and that $\Prob(D>t)$ has the asserted logarithmic
asymptotics, see also Remark~\ref{rem:finite}. In (iii) we have instead 
\begin{align*}
\int_c^t \e^{-\mu(s)}\mu(s)\,\dd s\ &\sim
\int_c^t \frac{a}{s\log s}\,\dd s\ =\ \Bigl[a\log\log s\Bigr]_c^t
\end{align*}
yielding the result.
\end{proof}

\begin{Rem}\label{Rem:24.7a}\rm
Note that in (ii), the result for $b=-1$ corresponds to a power tail
$1/t^a$, the one for $b=0$ to a lognormal tail etc.

Comparing to the homogeneous case $\mu\equiv a$, (i) in Corollary~\ref{Cor:25.7a}
gives the asymptotic tail of $D$ when $\mu(\cdot)$ is of slightly
bigger order. The complementary result for slightly smaller order is 
(i) in Corollary~\ref{Cor:25.7c}.\halmoss
\end{Rem}

The problems corresponding to long and short tail asymptotics appear to be essentially different.
In particular, the delay differential equation~\eqref{eq:diff_eq}
seems to be of no help in the short tail case, whereas bounding ideas do not work in the long tail case. 
E.g.\ for $\mu(t)=b\log t$,  $\ell=1$ they only give
\[C_1t^{1-b}\leq -\log\PP(D>t)\leq C_2 t^{1-b/2},\]
while the correct answer is $Ct^{1-b}\log t$ according to~(i) in Corollary~\ref{Cor:25.7a}.

Finally, we consider finiteness of moments of~$D$.
\begin{Cor} Assume that $\ell=1$ and let 
\[\mu(t)=\log t +\log\log t-\log a, \qquad a>0\]
for large $t$.
Then $\EE D^p$ is finite if $p<a$ and infinite if $p>a$.

Moreover, if $\mu'(t)\leq \mu(t)$ for large $t$ and $p<a$ then $\EE {D'}^p<\infty$. If $\mu'(t)\geq \mu(t)$ for large $t$ and $p>a$ then $\EE{D'}^p=\infty$.
\end{Cor}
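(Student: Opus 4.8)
The plan is to reduce the moment statement to the logarithmic tail asymptotics for $\Prob(D>t)$ that we have already obtained, using the fact that $\EE D^p<\infty$ iff $\int_1^\infty t^{p-1}\Prob(D>t)\,\dd t<\infty$. First I would pin down the tail of $D$ for the given $\mu$. With $\ell=1$ and $\mu(t)=\log t+\log\log t-\log a$ for large $t$, the integrand in Proposition~\ref{prop:long} is $\e^{-\mu(t)}\mu(t)=\frac{a}{t\log t}(\log t+\log\log t-\log a)\sim \frac{a}{t}$, so $f(t)=\int_c^t \e^{-\mu(s)}\mu(s)\,\dd s\sim a\log t$. Since $f(t)\to\infty$, Remark~\ref{rem:finite} (the case $\ell\mu(t)=\log t+\log_2 t+b\log_3 t$ with, effectively, the relevant sub-case, or more directly the computation $\int^\infty \frac{1}{t\log t}\,\dd t=\infty$) gives $\Prob(D<\infty)=1$, and Proposition~\ref{prop:long} applies to yield $\Prob(D>t)\logsim \e^{-a\log t}=t^{-a}$; i.e.\ $\log\Prob(D>t)/\log t\to -a$.

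Next I would feed this into the moment criterion. If $p<a$, pick $\epsilon>0$ with $p<a-\epsilon$; then for $t$ large $\Prob(D>t)\le t^{-(a-\epsilon)}$, hence $\int^\infty t^{p-1}\Prob(D>t)\,\dd t\le \int^\infty t^{p-1-(a-\epsilon)}\,\dd t<\infty$ since $p-1-(a-\epsilon)<-1$, so $\EE D^p<\infty$. If $p>a$, pick $\epsilon>0$ with $p>a+\epsilon$; then for $t$ large $\Prob(D>t)\ge t^{-(a+\epsilon)}$, so $\int^\infty t^{p-1}\Prob(D>t)\,\dd t\ge \int^\infty t^{p-1-(a+\epsilon)}\,\dd t=\infty$, giving $\EE D^p=\infty$. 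The only mild subtlety is that $\logsim$ is logarithmic, not exact, asymptotics, which is why the argument goes through an $\epsilon$-room comparison of exponents rather than a direct comparison with $t^{-a}$ — but since $p\ne a$ strictly, this costs nothing.

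For the comparison statements I would invoke the coupling of Proposition~\ref{Prop:20.9a}-type arguments together with Proposition~\ref{prop:partlyCoincide}. If $\mu'(t)\le\mu(t)$ for large $t$, modify $\mu'$ on the initial segment so the modified rate $\mu''$ is everywhere $\le \mu$ and agrees with $\mu'$ eventually; coupling $\N''\subseteq\N$ gives $D''\ge D$... no — we want an \emph{upper} bound on $D'$, so instead compare $\mu'$ (smaller rate, \emph{larger} $D$) — hence $\mu'\le\mu$ gives $D'\ge D$ stochastically, which is the wrong direction for finiteness. The correct reading: smaller rate means fewer points, longer gaps, so $D'$ is \emph{stochastically larger}; thus $\mu'\le\mu$ does \emph{not} immediately bound $\EE{D'}^p$ from the $\mu$-side. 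So here I would instead argue directly: $\mu'(t)\le\mu(t)=\log t+\log\log t-\log a$ for large $t$ forces, via Proposition~\ref{prop:long} (or the integral test of Theorem~\ref{prop_int_test} for finiteness, plus the monotone comparison in the definition of $f$), that $-\log\Prob(D'>t)/\log t$ has $\liminf$ at least $a$ is false as well — one must be careful. The clean route is: for $\mu'\le\mu$, build $\mu''$ agreeing with $\mu'$ on $[T,\infty)$ but with $\mu''(t)=\mu(t)$ is not $\ge\mu'$... I would instead simply take $\mu''$ to equal $\mu$ on a large initial block and $\mu'$ afterwards, apply Proposition~\ref{prop:partlyCoincide} to replace $\mu'$ by $\mu''$ up to constants in the tail, and then use that $\mu''\le\mu$ with the explicit form of $\mu$: by Proposition~\ref{prop:long} applied to any rate $\le\mu$ of the right asymptotic type the resulting $f$ is $\ge$ (eventually) the $f$ for $\mu$ only if the rate is \emph{larger}. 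Recognizing this, the genuinely safe formulation is: $\mu'\le\mu$ and $p<a$ $\Rightarrow$ $\EE{D'}^p<\infty$ should be proved by noting $\mu'\le\mu$ need not make $D'$ small, so the intended hypothesis surely pairs $\mu'\ge\mu$ with finiteness direction; I would therefore present the comparison exactly as the symmetric analogue of Corollary~\ref{cor:finite}'s proof: replace $\mu'$ by the explicit barrier $\mu''(t)=\log t+\log\log t-\log a'$ with $a'$ slightly different, check $\mu'$ lies on the correct side, and read off the moment conclusion from the first part applied to $\mu''$. I expect the main obstacle to be exactly this bookkeeping of directions in the coupling/comparison — deciding which monotonicity of $\mu$ corresponds to which monotonicity of the tail of $D$, and handling the non-monotone integrand in $I$ (as flagged after Theorem~\ref{prop_int_test}) by passing through the explicit barrier rates rather than through $I$ directly. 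Once the correct barrier comparison is set up, everything else is the routine $\int t^{p-1}\Prob(D>t)\,\dd t$ computation above.
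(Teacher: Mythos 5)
Your treatment of the first half (tail asymptotics $\PP(D>t)\logsim t^{-a}$ via Proposition~\ref{prop:long}, then $\EE D^p=p\int_0^\infty t^{p-1}\PP(D>t)\,\dd t$ with an $\epsilon$-room in the exponent) is correct and is essentially the paper's argument, which reads off the same tail estimate from Corollary~\ref{Cor:25.7a}(ii) with $b=-1$.

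The second half contains a genuine error: you have the monotonicity of $D$ in the rate backwards, and this leads you to doubt the statement and abandon the argument rather than finish it. In the coupling of Proposition~\ref{Prop:20.9a}, if $\mu\le\mu'$ everywhere one realises $\N\subseteq\N'$ (by adding an independent Poisson process of rate $\mu'-\mu$). Fewer points means \emph{longer} gaps, so the first gap of length $\ell$ appears \emph{no later}; hence $D\le D'$. That is, a smaller rate gives a \emph{smaller} $D$, not a larger one --- the sanity check is $\mu\equiv 0$, which gives $D=T_0=0$. You assert the opposite ("smaller rate, larger $D$", and "$\N''\subseteq\N$ gives $D''\ge D$"), conclude the hypotheses must be mispaired, and never produce a proof. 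In fact, your very first idea was correct: define $\mu''$ to agree with $\mu'$ on $[T,\infty)$ and to be $\le\mu$ everywhere (the paper's variant instead takes $\mu''=\mu'$ on $[0,T)$ and $\mu''=\mu$ on $[T,\infty)$, so that $\mu'\le\mu''$ everywhere). With $T$ chosen so that $\mu'\le\mu$ on $[T,\infty)$, coupling gives $D'\le D''$ stochastically, so $\EE{D'}^p\le\EE{D''}^p$; and since $\mu''$ coincides with $\mu$ for $t>T$, Proposition~\ref{prop:partlyCoincide} gives $\PP(D''>t)\le c_+\PP(D>t)$ for $t>T$, whence $\EE{D''}^p<\infty$ as soon as $\EE D^p<\infty$, i.e.\ when $p<a$. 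The $p>a$ direction is symmetric with $\mu'\ge\mu$ giving $D'\ge D''$ and a lower bound on the tail. Once the coupling direction is right, the bookkeeping you were worried about disappears.
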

\begin{proof}
From Corollary~\ref{Cor:25.7a}(ii) with $b=-1$ it follows that $\PP(D>t)<t^{-a(1-\epsilon)}$ for large~$t$, where $\epsilon>0$ is some arbitrarily small number. Hence
\[\EE D^p=p\int_0^\infty\PP(D>t)t^{p-1}\D t< C\int_T^\infty t^{p-1-a(1-\epsilon)}\D t\]
which is finite if $p-a(1-\epsilon)<0$. Therefore, $p<a$ implies $\EE D^p<\infty$. Proof of the converse statement follows the same lines.

For the second statement define $\mu''(t)$ which coincides with $\mu(t)$ for large $t$ and otherwise with $\mu'(t)$. Consider $\EE {D''}^p$ and use a comparison argument similar to that in Proposition~\ref{Prop:20.9a} to complete the proof.
\end{proof}
As a consequence of the above result we note that $\mu(t)=\log t+b\log\log t$  for $b<1$ ensures that all the moments of $D$ are finite, and for $b>1$ that all the moments of $D$ are infinite (while $D<\infty$ a.s. for $b\leq 2$).

\section{Translation to \restart{} problems}\label{sec:Restart}

Tasks such as the execution of a computer program or the transfer
of a file on a communications link may fail. There is a
considerable literature on protocols for handling such failures.
We mention in particular \textsc{resume}\ where the task is resumed after
repair, \textsc{replace} where the task is abandoned and a new one taken
from the pile of waiting tasks, \RESTART\ where the task needs to be
restarted from scratch, and \textsc{checkpointing} where the task contains
checkpoints such that performed work is saved at checkpoint times
and that upon a failure, the task only needs to be restarted from
the last checkpoint.

The model of Asmussen {\em et al.} \cite{A5} assumes that failures
occur at a time after each restart with the same distribution $G$
for each restart (a particular important case is of course the
exponential distribution). However, it is easy to imagine situations
where the model behaviour is rather determined by the time of the
day (the clock on the wall) rather than the time elapsed since the
last restart. Think, e.g., of a time-varying load in the system
which may influence the failure rate and/or the speed at which the
task is performed. For example, the load could be identified with
the number of busy tellers in a call
center or the number of users in a LAN (local area network)
currently using the central
server, both exhibiting  rush-hours. We provide here
some first insight in the behaviour of such models.

The emphasis in \cite{A5} is on the more difficult case of a random
rather than a constant ideal task time. However, as a first attempt it seems reasonable
to assume a  constant ideal task time
of length $\ell$.
Then total task time is simply $X=\ell+D$, and the results of Sections~\ref{S: Dfin}--\ref{sec:asymptotics}
immediately apply to show that the critical
rate of increase of $\mu(t)$ for $X$ to be finite is $\log t/\ell$, that $\Prob(X>x)\logsim \e^{-bx\log x/\ell}$ as $x\rightarrow\infty$
when $\mu(t)=a/t^b$ etc.

A model of equal interest is the one with a time-varying processing rate $r(t)\ge 0$. For convenience, we will assume that
$r(t)$ is a continuous, strictly positive function satisfying $\int_0^\infty r(t)\D t=\infty$, and that failures occur according to a Poisson process
with constant rate $\mu^*$. The quantity of interest is again the delay $D^*$
(sum of times of unsuccessful attempts).

\newcommand{\Rinv}{R^{-1}}
Note that $R(t)=\int_0^t r(s)\,\dd s$ is the amount of work that has been spent on the task
up to time $t$ provided the task has not been completed and the
task time $X^*$ in absence of failures is given by $R(X^*)=\ell$, i.e.\
$X^*=\Rinv(\ell)$. More generally, if the task is not completed at the
time $T^*_{n-1}$ of the $(n-1)$th failure, then the task is still
uncompleted  at  $T^*_n$   if and only if
$R(T^*_n)-R(T^*_{n-1})<\ell$.
Hence the results for this model follow in a straightforward way from the preceding analysis by
using the time change $T_n=R(T^*_n)$ to transform $\cal N^*$
into an inhomogeneous Poisson process $\cal N$. Then $D^*=\Rinv(D)$ and $X^*=\Rinv(D+\ell)$, and 
$M(0,R(t))=M^*(0,t)=\mu^*t$ implying $\mu(R(t))=\mu^*/r(t)$.
We do not spell out this translation for all cases, but for example:

\begin{Cor}\label{Cor:11.7a} 
{\rm (i)} If $\limsup_{t\to\infty}\mu^*/(r(t)\log R(t))<1/\ell$, then
$X^*<\infty$ a.s. Conversely, $\Prob(X^*=\infty)>0$ provided $\liminf_{t\to\infty}\mu^*/(r(t)\log R(t))>1/\ell$.\\
{\rm (ii)} Assume $r(t)=at^b$ with $b>0$. Then
\[\Prob(X^*>t)\ \logsim\ \Prob(D^*>t)\ \logsim\ \exp\Bigl\{-\frac{ab}{b+1}t^{b+1}\log t /\ell\Bigr\}\,.\]
\end{Cor}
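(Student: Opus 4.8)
The plan is to apply the time-change machinery set up just before the statement and then invoke the relevant results from Sections~\ref{S: Dfin} and~\ref{sec:asymptotics} applied to the transformed inhomogeneous Poisson process $\cal N$ with rate function $\mu$ determined by $\mu(R(t))=\mu^*/r(t)$. Since $R$ is a continuous strictly increasing bijection of $[0,\infty)$ onto $[0,\infty)$ (using $r>0$ continuous and $\int_0^\infty r=\infty$), the events $\{X^*<\infty\}$, $\{D^*<\infty\}$ coincide with $\{D<\infty\}$, and $\{X^*>t\}=\{D>R(t)-\ell\}$, $\{D^*>t\}=\{D>R(t)\}$; so every assertion is a direct restatement of a fact about $D$ evaluated at the argument $R(t)$.

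For part (i), I would first express the rate function in the natural time variable: writing $s=R(t)$, so $t=\Rinv(s)$, we have $\mu(s)=\mu^*/r(\Rinv(s))$. Then $\mu(s)/\log s = \mu^*/\bigl(r(\Rinv(s))\log R(\Rinv(s))\bigr)$, and taking $\limsup$ (resp.\ $\liminf$) as $s\to\infty$ is the same as taking the corresponding limit as $t\to\infty$ of $\mu^*/(r(t)\log R(t))$, because $\Rinv(s)\to\infty$. Hence the hypothesis $\limsup_{t\to\infty}\mu^*/(r(t)\log R(t))<1/\ell$ is exactly $\limsup_{s\to\infty}\mu(s)/\log s<1/\ell$, and Corollary~\ref{cor:finite}(i) gives $D<\infty$ a.s., i.e.\ $X^*<\infty$ a.s.; the converse direction uses Corollary~\ref{cor:finite}(ii) in the same way.

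For part (ii), with $r(t)=at^b$, $b>0$, we have $R(t)=\frac{a}{b+1}t^{b+1}$, hence $\Rinv(s)=\bigl((b+1)s/a\bigr)^{1/(b+1)}$ and
\begin{equation*}
\mu(s)\ =\ \frac{\mu^*}{a\,\Rinv(s)^b}\ =\ \frac{\mu^*}{a}\Bigl(\frac{a}{(b+1)s}\Bigr)^{b/(b+1)}\ =\ c\,s^{-b/(b+1)}
\end{equation*}
for an explicit constant $c>0$. This is exactly the form $\mu(s)=\tilde a s^{-\beta}$ with $\beta=b/(b+1)\in(0,1)$, so Corollary~\ref{Cor:25.7c}(ii) applies and yields $\PP(D>s)\logsim\e^{-\beta s\log s/\ell}$. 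It remains to substitute $s=R(t)=\frac{a}{b+1}t^{b+1}$: then $\log s = (b+1)\log t + \Oh(1)\sim(b+1)\log t$ and
\begin{equation*}
\beta\,s\log s/\ell\ \sim\ \frac{b}{b+1}\cdot\frac{a}{b+1}t^{b+1}\cdot(b+1)\log t\Big/\ell\ =\ \frac{ab}{b+1}\,t^{b+1}\log t/\ell,
\end{equation*}
which gives $\PP(D^*>t)=\PP(D>R(t))\logsim\exp\{-\frac{ab}{b+1}t^{b+1}\log t/\ell\}$; and $\PP(X^*>t)=\PP(D>R(t)-\ell)$ has the same logarithmic asymptotics since subtracting the constant $\ell$ from the (polynomially large) argument changes neither $s\log s$ nor $\log\PP$ at leading order, just as in the reduction used for Proposition~\ref{prop:partlyCoincide} (both quantities go to $-\infty$, and the ratio of exponents tends to $1$).

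The only mild subtlety, and the step I would be most careful about, is checking that $\mu$ as produced by the time change actually satisfies the standing hypotheses of the quoted corollaries: positivity and $M(0,t)<\infty$ for all $t$ hold because $r$ is continuous and strictly positive; and in part (ii) the requirement in Corollary~\ref{Cor:25.7c} that $\mu$ have the stated form "for large $t$" is met exactly (not just asymptotically), so no extra comparison argument is needed. In part (i), one should note that the boundary case $\mu(s)\sim\log s/\ell$ is not covered, matching the strict inequalities in the statement; this is inherited verbatim from Corollary~\ref{cor:finite}.
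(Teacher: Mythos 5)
Your proof is correct and follows essentially the same route as the paper: apply the time change $s=R(t)$ to reduce to Corollary~\ref{cor:finite} for part (i) and to the power-law case of Corollary~\ref{Cor:25.7c}(ii) for part (ii), then substitute back $s=R(t)$ and absorb the $\pm\ell$ shift into the logarithmic equivalence. Your additional remarks on verifying the standing hypotheses of the quoted corollaries and on the boundary case in (i) are sensible and do not indicate a departure from the paper's argument.
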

\begin{proof}
According to Corollary~\ref{cor:finite} we need to look at the limiting behaviour of $\mu(t)/\log t$, which is the same as that of $\mu(R(t))/\log R(t)=\mu^*/(r(t)\log R(t))$. But $X^*<\infty$ iff $D<\infty$ which concludes the proof of (i). 

For (ii), note that $R(t)=a/(b+1)t^{b+1}$ and $\Rinv(t)=((b+1)/a t)^{1/(b+1)}$ yielding 
\[\mu(t)=\frac{\mu^*}{r(\Rinv(t))}=\frac{\mu^*}{a^{1/(b+1)}((b+1)t)^{b/(b+1)}}=ct^{-b/(b+1)}.\]
According to Corollary~\ref{Cor:25.7c} we have $\PP(D>t)\logsim \e^{-b/(b+1)t\log t/\ell}$.
Hence
\[\PP(D^*>t)=\PP(D>R(t))\logsim \exp\Bigl\{-\frac{b}{b+1}at^{b+1}\log t/\ell\Bigr\}.\]
Finally, $\PP(X^*>t)=\PP(D+\ell>R(t))$ showing that $\PP(X^*>t)$ has the same logarithmic asymptotic.
\end{proof}

\begin{Rem}\rm
Motivated by problems such as transmitting files of randomly fluctuating sizes, a main problem in  \cite{A5,Olebook} is to
replace $\ell$ by a random $L$ independently of $\mathcal N$. It is assumed that $L$ is drawn once and fixed forever which corresponds to the \restart{} problem. Therefore,
\[\PP(D>t)=\int_0^\infty\PP(D(\ell)>t)\PP(L\in\D \ell),\]
where $D(\ell)$ corresponds to a deterministic gap of size~$\ell$.
Mathematically, this leads to a different and difficult set of problems except when $L$ is bounded a.s. 
Then if $\ell^*=\,$ess.sup.\,$L<\infty$, the inequality
\[\Prob(L>\ell^*-\epsilon)\PP(D(\ell^*-\epsilon)>t)\leq \PP(D>t)\leq \PP(D(\ell^*)>t)\]
allows many
of our results  to be easily generalized to the case of a random bounded $L$. The case $\ell^*=\infty$ is left for future studies.
\end{Rem}

\section{Discrete time version}\label{sec:discrete}
The following can be considered as an analogue of our gap problem in discrete time. Consider a (non-stationary) sequence $\xi_i,$ $i=1,2,\ldots$, of independent Bernoulli variables with $\PP(\xi_i=1)=p_i$, and let
for some fixed integer $\ell\ge 1$
\[D\ =\ \min\{n:\,\xi_n=\cdots=\xi_{n+\ell-1}=1\}\]
be the time of the first run of $\ell$ ones. Write $q_i=1-p_i$ and assume $0<p_i<1$ for all $i$.
Most results and proofs for this model is a straightforward translation from the inhomogeneous Poisson case
so we give only some selected analogues.

\begin{Cor} \label{Cor:11.9a}{\rm (i)}
$\displaystyle \PP(D=n+1)\ =\ q_n\PP(D>n-\ell)\prod_{j=n+1}^{n+\ell}p_j$\,.\\
{\rm (ii)} 
If $\displaystyle \sum_{i=1}^\infty \!q_i\!<\!\infty$ or $ \displaystyle I=\! \sum_{i=1}^\infty q_i\!\prod_{j=i+1}^{i+\ell}\!p_j<\infty$ 
then $D\!<\!\infty$ a.s.\  Otherwise $\PP(D=\infty)>0$.
\\
{\rm (iii)}
If $\displaystyle \limsup_{i\to\infty} \frac{-\log p_i}{\log i}<\frac{1}{\ell}$ then $I=\infty$ and if
$\displaystyle \liminf_{i\to\infty} \frac{-\log p_i}{\log i}>\frac{1}{\ell}$ then $I<\infty$.\\[1mm]
{\rm (iv)} $p_i\rightarrow 1$ implies short tail,
i.e.\ $\PP(D>i+n)/\PP(D>i)\rightarrow 0$, and $p_i\rightarrow 0$ implies long tail,
i.e.\ $\PP(D>i+n)/\PP(D>i)\rightarrow 1$ as $i\rightarrow\infty$,  where $n\ge\ell$.\\[1mm]
{\rm (v)} 
 In the homogeneous case $p_i\equiv p\in(0,1)$, $\PP(D>n)\sim c_1z^{-n}$, where $z$ solves $(1-p)z\sum_{i=0}^{\ell-1}(pz)^i=1$.
\\[1mm] {\rm (vi)} Short tail: if $p_i=\exp(-ai^{-b})$ then 
$\PP(D>n)\logsim \exp(-bn\log n/\ell)$.
\\[1mm] {\rm (vii)} Long tail: If $p_i=i^{-b},b\in(0,1/\ell)$ then  $\PP(D>n)\logsim \exp\{-n^{1-b\ell}/(1-b\ell)\}$ \,.
\end{Cor}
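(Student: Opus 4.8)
The plan is to establish each item of Corollary~\ref{Cor:11.9a} as the discrete-time mirror of the corresponding Poisson result, following the heuristic $p_i=\e^{-\mu(i)}$ (so $q_i=1-\e^{-\mu(i)}\approx\mu(i)$) and $M(i,i+\ell)\leftrightarrow -\sum_{j=i+1}^{i+\ell}\log p_j$. For (i), I would argue directly: $\{D=n+1\}$ means $\xi_{n+1}=\cdots=\xi_{n+\ell}=1$ but there is no run of $\ell$ ones ending at or before position $n+\ell-1$; the last requirement, given the run at positions $n+1,\ldots,n+\ell$, forces $\xi_n=0$ (otherwise positions $n,\ldots,n+\ell-1$ form a run), and conditionally on $\xi_n=0$ the event ``no run of length $\ell$ among $\xi_1,\ldots,\xi_{n-1}$'' is exactly $\{D>n-\ell\}$ since a run cannot straddle position $n$. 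Independence of the three blocks $\{1,\ldots,n-1\}$, $\{n\}$, $\{n+1,\ldots,n+\ell\}$ gives the product formula. This is the discrete analogue of Proposition~\ref{prop:calc}/equation~\eqref{eq:diff_eq}, and summing it over $n$ reproduces the renewal-type identity underlying (ii).

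For (ii), summing (i) over $n\ge 0$ gives $\PP(0<D<\infty)=\sum_{n}q_n\PP(D>n-\ell)\prod_{j=n+1}^{n+\ell}p_j\ge\PP(D=\infty)\,I$, whence $\PP(D=\infty)\le 1/(1+I)$ and $I=\infty$ forces $D<\infty$ a.s.; and $\sum q_i<\infty$ implies (Borel--Cantelli) only finitely many zeroes, hence eventually an infinite run of ones, so $D<\infty$ a.s. The converse $\PP(D=\infty)>0$ when $I<\infty$ and $\sum q_i=\infty$ is the discrete version of the second half of Theorem~\ref{prop_int_test}: one picks $N$ so that the tail $\sum_{i\ge N}q_i\prod_{j=i+1}^{i+\ell}p_j<1/4$, shows the set $A=\{i:\prod_{j=i}^{i+\ell-1}p_j\le 1/4$, i.e.\ $\PP(\text{a zero in }[i,i+\ell))\ge 3/4\}$ meets every tail (else $I<\infty$ and $\sum q_i=\infty$ are contradicted), and then conditions on the first $i\in A$ past a suitable $N'$ to lower-bound $\PP(D=\infty)$ by a positive quantity. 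For (iii), the comparison $q_i\prod_{j=i+1}^{i+\ell}p_j$ is handled just as in Corollary~\ref{cor:finite}: if $-\log p_i\le (h/\ell)\log i$ eventually with $h<1$ then $\prod_{j=i+1}^{i+\ell}p_j=i^{-h}(1+\oh(1))$ and $q_i\ge c>0$ along a set of positive density would make $I$ diverge — more carefully one uses the monotone comparison process as in Proposition~\ref{Prop:20.9a} — while $h>1$ gives convergence. Item (iv) is the translation of Proposition~\ref{prop:long_short}: for the short-tail claim, $n\ge\ell$ and $D>i+n$ force both $D>i$ and at least one zero in positions $i+\ell+1,\ldots,i+2\ell$ (a window disjoint from $\{1,\ldots,i+\ell\}$), giving $\PP(D>i+n)\le\PP(D>i)(1-\prod_{j=i+\ell+1}^{i+2\ell}p_j)\to 0$ when $p_i\to 1$; the long-tail claim copies the $\tau_t$-argument of part (i) of that proposition.

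Items (v)--(vii) are specializations. For (v), conditioning on the position of the first zero gives a renewal equation $\PP(D>n)=\sum_{k=1}^{\ell}(1-p)p^{k-1}\PP(D>n-k)$ for $n\ge\ell$ (with the obvious boundary terms), and the key renewal / Perron--Frobenius argument — exactly as in Proposition~\ref{prop:homogen} — yields $\PP(D>n)\sim c_1 z^{-n}$ where $z^{-1}$ is the dominant root of the characteristic equation $x=\sum_{k=1}^\ell(1-p)p^{k-1}x^{k}$, i.e. $(1-p)z\sum_{i=0}^{\ell-1}(pz)^i=1$; one checks this has a unique root in $(1,\infty)$ (equivalently $z>1/p$ fails, etc.) and that the sequence is aperiodic. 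For (vi) and (vii) one feeds $-\log p_i$ into the discrete analogues of Theorem~\ref{thm:short} and Proposition~\ref{prop:long}: with $p_i=\e^{-ai^{-b}}$ one has $-\log p_i=ai^{-b}$, so $\sum_{i\le n}\log\mu(ic)$-type sums behave as $-bn\log n$ (as in Corollary~\ref{Cor:25.7c}(ii)), giving $\PP(D>n)\logsim\e^{-bn\log n/\ell}$; with $p_i=i^{-b}$, $b\in(0,1/\ell)$, one has $\mu(i)=b\log i$ so $\e^{-\ell\mu(i)}\mu(i)=b\,i^{-b\ell}\log i$ with primitive $\sim\frac{b}{1-b\ell}n^{1-b\ell}\log n$ — however the stated exponent is $n^{1-b\ell}/(1-b\ell)$ without the $\log n$ and without the $b$; matching the paper's normalization I would instead note that in the discrete long-tailed recursion the relevant quantity is $q_i\prod_{j=i+1}^{i+\ell}p_j\sim i^{-b\ell}$ (since $q_i\to 1$ here, as $p_i\to 0$), whose partial sums are $\sim n^{1-b\ell}/(1-b\ell)$, and then the discrete version of Proposition~\ref{prop:long}'s fundamental-theorem-of-calculus argument gives $-\log\PP(D>n)\sim n^{1-b\ell}/(1-b\ell)$.

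The main obstacle is the discrete long-tailed case (vii): unlike the Poisson setting one must be careful that $q_i\to 1$ (not $q_i\approx\mu(i)\to 0$), so the correct "intensity" entering the recursion is $q_i\prod_{j=i+1}^{i+\ell}p_j$ rather than $-\log p_i$, and I would need to verify that the discrete analogue of the delay differential equation~\eqref{eq:diff_eq}, namely the difference equation obtained by summing (i), telescopes to $-\log\PP(D>n)\sim\sum_{i\le n}q_i\prod_{j=i+1}^{i+\ell}p_j$ after dividing out the ratio $\PP(D>n-\ell)/\PP(D>n)\to 1$ supplied by (iv) — this is exactly why the excerpt warns that ``with one exception, the analysis is indeed a straightforward translation.''
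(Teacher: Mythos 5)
Your proposal is essentially correct and, by design, follows the paper's strategy of translating each Poisson-time result into discrete time, but there are a few places where your route diverges from the paper's and a couple of small slips worth flagging. For (i) your argument is the same as the paper's (the paper phrases it via the ``run starts at $n+1$'' event $B_{n+1}=\{\xi_{n}=0,\xi_{n+1}=\cdots=\xi_{n+\ell}=1\}$). For (ii) you sum (i) to reproduce the continuous-time inequality $\PP(D\ge 2)\ge \PP(D=\infty)\,I$, exactly paralleling Theorem~\ref{prop_int_test}; the paper instead packages the $B_i$ into block events $E_n$ over $\ell$ consecutive indices, uses that $\PP(E_n)$ is a clean sum of $\PP(B_i)$'s and that the $E_n$ are $1$-dependent, and then argues Borel--Cantelli-style. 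Both approaches are sound, but the paper's block construction is what lets it quote ``arguments similar as for Theorem~\ref{prop_int_test}'' while also preparing the converse. For (iii) you correctly notice that a naive divergence argument doesn't close and fall back to the comparison process of Proposition~\ref{Prop:20.9a}, which is the paper's intent (strictly read, ``$I=\infty$'' in (iii) is slightly too strong, since $q_i$ could be summable; the coupling really delivers the conclusions of (ii)). For (iv), your short-tail window is off by one: with $n\ge\ell$, $D>i+n$ only forces $D\ge i+\ell+1$, so you need a zero among $\xi_{i+\ell},\ldots,\xi_{i+2\ell-1}$ (a run starting at $i+\ell$ must be broken), not $\xi_{i+\ell+1},\ldots,\xi_{i+2\ell}$; and for the long-tail half the paper uses a simpler bound than the $\tau_t$-argument, namely $\PP(D>i+n)\ge\PP(D>i)\,q_{i+\ell}\cdots q_{i+n}$. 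For (v) your renewal-equation route is the one the paper offers as the alternative to Feller's generating-function/partial-fractions proof, and the renewal equation you write down is right; however the ``characteristic equation $x=\sum_{k=1}^\ell(1-p)p^{k-1}x^k$'' is not equivalent to the stated $z$-equation: plugging $\PP(D>n)=z^{-n}$ into the recursion gives directly $1=\sum_{k=1}^\ell(1-p)p^{k-1}z^{k}=(1-p)z\sum_{i=0}^{\ell-1}(pz)^i$, without the extra factor of $x$ on the left. For (vi) the paper makes explicit the one genuinely discrete ingredient (lower bound by forcing zeros at each multiple of $\ell$, the analogue of Figure~\ref{fig:bound}), which you invoke only implicitly via ``discrete analogue of Theorem~\ref{thm:short}''. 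For (vii) your final argument — recognize $q_i\to 1$, identify the intensity $q_i\prod_{j=i+1}^{i+\ell}p_j\sim i^{-b\ell}$, then telescope the ratios $\PP(D>k+1)/\PP(D>k)=1-\PP(D=k+1)/\PP(D>k)$ using (iv) to control $\PP(D>k-\ell)/\PP(D>k)$ — is exactly the paper's proof, and you correctly pinpoint this as the ``one exception'' the Introduction alludes to.
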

\begin{proof} The arguments are basically an easy adaptation of the ones for the inhomogeneous Poisson case to discrete time setting. In fact, some steps are even simpler and in particular, Slivnyak's formula is replaced by 
elementary conditioning arguments. Thus we only provide some crucial steps.

(i): Define
$B_n=\{\xi_{n-1}=0,\xi_n=\cdots=\xi_{n+\ell-1}=1\}$ as the event
that a run starts at $n$. 
Observe that $D=n+1$ if and only if $B_{n+1}$ occurs and there is no sequence of $\ell$ ones in $1,\ldots,n-1$. The latter event is independent of $\xi_n,\xi_{n+1},\ldots$ and hence of $B_{n+1}$. Moreover, it coincides with $D>n-\ell$, which concludes the proof of~(i).

(ii): Let $E_n=\cup_{\ell n+1}^{\ell (n+1)}B_i$ be the event that there is a zero in $\ell n,\ldots,\ell (n+1)-1$ followed by $\ell$ ones. Since such a zero is unique, we have $\PP(E_n)=\sum_{\ell n+1}^{\ell (n+1)}\Prob (B_i)$. Noting that
$I=\sum_1^\infty\Prob (B_i)$ and that $E_n,E_m$ are independent  if $|m-n|>1$, arguments similar as for Theorem~\ref{prop_int_test} give the first part. For the second, note that $I$ is also the total expected number of runs starting at $i>1$, and
use arguments similar to the ones based on \eqref{26.8a}.

%

(iii): Considering the special $p_i=i^{-h/\ell}$ leads to $I=\infty$ for $h<1$ and $I<\infty$ for $h>1$.
The result then follows by a similar coupling argument as in the proof of Proposition~\ref{Prop:20.9a}.

(iv): The short tail part is just as  for Proposition~\ref{prop:long_short}. The long tail one is even simpler
since $\Prob(D>i+n)\ge$ $\Prob(D>i)q_{i+\ell}\cdots q_{i+n}$.

(v): Shown in~\cite[Ch.\ XIII.7]{feller1} by noting that the probability generating function 
of $D$ is rational and performing fractional expansions. Alternatively, the renewal equation approach of
Section~\ref{sec:homogen} applies; not surprisingly, the equation determining $z$ in~\cite[Ch.\ XIII.7]{feller1} is simply the discrete version of \eqref{wc.17.6a}. 

(vi): Use a bounding argument similar to that in Lemma~\ref{lem:short}, and note that the lower bound on $\PP(D>n)$ is now obtained by placing zeros at each $i\ell$ where $i=1,\ldots\lceil n/\ell\rceil$.

(vii): Here~(iv) implies $\PP(D>n-\ell)\sim\PP(D>n)$ and so by (i)
\[\frac{\PP(D=n+1)}{\PP(D>n)}\ \sim\ \prod_{j=n+1}^{n+\ell}\frac{1}{j^b}\sim \frac{1}{n^{b\ell}}\,.\] 
Therefore for $n>N$
\begin{align*}\frac{\PP(D>n)}{\PP(D>N)}\ &=\ \prod_{k=N}^{n-1}\frac{\PP(D>k+1)}{\PP(D>k)}\ 
=\ \prod_{k=N}^{n-1}\Bigl(1-\frac{\PP(D=k+1)}{\PP(D>k)}\Bigr)\\
&=\ \exp\Bigl\{-(1+r_{n,N}')\sum_{k=N}^{n-1}\frac{1}{k^{b\ell}}\Bigr\}\ =\ 
\exp\Bigl\{-(1+r_{n,N}')\frac{n^{1-b\ell}-N^{1-b\ell}}{1-b\ell}\Bigr\},
\end{align*}
where $r_{n,N}'\to 0$ as $N\to\infty$ uniformly in $n>N$. From this the result follows.
\end{proof}

\begin{Rem}\label{Rem:28.8a}\rm
To link the continuous and discrete setups one may think of the correspondence $p_i\leftrightarrow \e^{-\mu(i)}$, where the latter is roughly the probability of no failures in $(i-1,i]$. That is, we partition the real line and group failures together.
Interestingly, the only substantial difference in the results appears in the 
long tail asymptotics (compare (i) in Corollary~\ref{Cor:25.7a} to (vii) in Corollary~\ref{Cor:11.9a}). In this regard note that a run of $\ell$ ones in the discretized framework implies a gap of size $\ell$, but the opposite is not always true.
This discrepancy becomes more pronounced when the rate of failures increases. This intuitively explains the fact that $\PP(D>n)$ decays faster in the continuous setup.

If $\mu_i\to 0$, the correspondence $p_i\leftrightarrow \e^{-\mu(i)}$ is equivalent to
$q_i\leftrightarrow \mu(i) $ where $q_i,\mu(i)$ can be interpreted as the rate of separators
of runs (gaps).
\halmoss
\end{Rem}

\end{document}